\renewcommand{\l}{\lambda}
\newcommand{\setZ}{{\mathbb{Z}}}
\newcommand{\setN}{{\mathbb{N}}}
\newcommand{\maj}{{\rm{maj}}}
\newcommand{\vmr}{{\rm{vmr}}}
\newcommand{\cp}{{\rm{Com}}}
\newcommand{\Acal}{{\mathcal{A}}}
\newcommand{\Dcal}{{\mathcal{D}}}
\newcommand{\Ecal}{{\mathcal{E}}}
\newcommand{\Fcal}{{\mathcal{F}}}
\newcommand{\Lcal}{{\mathcal{L}}}
\newcommand\qbin[2]{{\left[\begin{matrix} #1 \\ #2 \end{matrix} \right]}}
 \theoremstyle{plain}
\newtheorem{theorem}{Theorem}[section]
\newtheorem{lemma}[theorem]{Lemma}
\newtheorem{corollary}[theorem]{Corollary}
\newtheorem{proposition}[theorem]{Proposition}
\newtheorem{definition}[theorem]{Definition}
\newtheorem{example}[theorem]{Example}
\newtheorem{remark}[theorem]{Remark}
\begin{document}

\title[Enumeration of partitions with prescribed successive rank parity blocks]
{Enumeration of partitions with prescribed successive rank parity blocks}

\author{Seunghyun Seo}
\address{Department of Mathematics Education, Kangwon National University,
Chuncheon, Kangwon 24341, The Republic of Korea} \email{shyunseo@kangwon.ac.kr}

\author{Ae Ja Yee}
\address{Department of Mathematics, The Pennsylvania State University,
University Park, PA 16802, USA} \email{auy2@psu.edu}


\maketitle

\footnotetext[1]{The first author was partially supported by a research grant of Kangwon National University in 2015.} \vspace{0.5in}
\footnotetext[2]{The second author was partially supported by a grant ($\#$280903) from the Simons Foundation.} \vspace{0.5in}
\footnotetext[3]{2010 AMS Classification Numbers: Primary, 05A17; Secondary, 11P81.}

\noindent{\footnotesize{\bf Abstract.}
Successive ranks of a partition, which were introduced by Atkin,  are the difference of the $i$-th row and the $i$-th column in the Ferrers graph.  Recently, in the study of singular overpartitions, Andrews revisited successive ranks and parity blocks.
Motivated by his work, we investigate partitions with prescribed successive rank parity blocks.  The main result of this paper is the generating function of partitions with exactly $d$ successive ranks and $m$ parity blocks. }

\noindent{\footnotesize{\bf Keywords:}
Partitions, Frobenius symbols, Singular overpartitions, Sucessive ranks, Dyck paths, Poset-partitions.
}

\section{Introduction}

In 1944,  defining the rank of a partition as the largest part minus the number of parts, F. Dyson conjectured that the rank statistic would account for the following Ramanujan partition congruences combinatorially \cite{dyson1944}:
\begin{align*}
p(5n+4)\equiv 0 \pmod{5},\\
p(7n+5)\equiv 0 \pmod{7},
\end{align*}
where $p(n)$ denotes the number of partitions of $n$. Dyson's conjecture was proved by A. O. L. Atkin and H. P. F. Swinnerton-Dyer \cite{atkinswinnertondyer66}. In search of the crank statistic   for
\begin{equation*}
p(11n+6)\equiv 0 \pmod{11}
\end{equation*}
whose existence was conjectured by Dyson,
 Atkin introduced successive ranks  and showed how they would replace the rank statistic in various algebraic expressions \cite{atkin66}.

Developing a new sieve method for counting partitions, G. E. Andrews investigated generating functions related to successive ranks \cite{gea50}, and subsequently Andrews' work was generalized by D. M. Bressoud \cite{bressoud1980sieve}.

In \cite{gea50} and \cite{bressoud1980sieve}, the concept of oscillations of the successive ranks were introduced and used to study Rogers-Ramanujan type partition identities, and further study was made on variations of successive ranks in  \cite{gea123}.  Recently, in the study of singular overpartitions \cite{gea303}, Andrews revisited successive ranks and parity blocks.

The main purpose of this article is to study partitions with prescribed successive ranks and their parity blocks. It turns out that the enumeration of such partitions involves counting a certain type of plane partitions.

A partition may be represented by a Ferrers graph \cite{geapartitions}.  For instance, the partition $7+5+5+3+2+2+1$ has the representation:
\begin{figure}[ht]
\begin{tikzpicture}[scale=1]
\foreach \x in {0,...,6}
	\filldraw (\x*.25, 0) circle (.5mm);
\foreach \x in {0,...,4}
	\filldraw (\x*.25, -.25) circle (.5mm);
	\foreach \x in {0,...,4}
	\filldraw (\x*.25, -.5) circle (.5mm);
	\foreach \x in {0,...,2}
	\filldraw (\x*.25, -.75) circle (.5mm);
	\foreach \x in {0,...,1}
	\filldraw (\x*.25, -1) circle (.5mm);
\foreach \x in {0,...,1}
	\filldraw (\x*.25, -1.25) circle (.5mm);
\foreach \x in {0,...,0}
	\filldraw (\x*.25, -1.5) circle (.5mm);
\end{tikzpicture}
\end{figure}

\noindent The successive ranks of a partition are defined along the main diagonal of the Ferrers graph, namely the $i$-th rank is the number of dots in row $i$ minus  the number of dots in column $i$.

Meanwhile, in the Ferrers graph, by reading off rows to the right of (resp. columns  below)  the main diagonal and putting their sizes on the top row (resp. bottom row), we can represent a partition $\lambda$ of $n$ as a two-rowed array \cite{gea109, yee12}:
 \begin{equation*}
\left( \begin{array}{cccc} x_1 & x_2 & \cdots & x_d
\\
y_1 & y_2 & \cdots & y_d
\end{array} \right),
\end{equation*}
where $d$ is the number of dots in the main diagonal, $\sum_{i=1}^{d}(x_i +y_i+1)=n$,  $x_1>x_2>\cdots>x_d \ge0$, and
$y_1>y_2>\cdots>y_d \ge0$. This is called a Frobenius symbol of $\lambda$.  The Frobenius symbol of the partition $7+5+5+3+2+2+1$ is
\begin{equation*}
\left(\begin{array}{ccc} 6 & 3& 2\\ 6 & 4 & 1\end{array} \right).
\end{equation*}
From the construction, we can easily see that $x_i-y_i$ is the $i$-th rank.

Throughout this paper, we will use Frobenius symbols to represent partitions.  For a partition $\lambda$,
a column $\begin{matrix} x_i \\ y_i \end{matrix}$ of $\l$ is said to be positive if $x_i-y_i \ge 1$ and {negative} if $x_i-y_i \le 0$.  If two columns are both positive or both negative, we shall say that they have the same parity.  We now divide $\l$ into {parity blocks}.  These are sets of contiguous columns maximally extended to the right, where all the entries have the same parity.   We shall say that a block is positive (resp. negative) if it contains positive (resp. negative) columns. The  Frobenius symbol above has two parity blocks, negative and positive as follows:
\begin{equation*}
\left(\begin{array}{cc|c}6 & 3 & 2\\ 6 & 4 & 1\end{array}\right),
\end{equation*}
where the vertical bar divides the blocks.

For positive integers $d$ and  $m$ with $d\ge m$, let $a^+_{m}(n;d)$ and $a^-_{m}(n;d)$ be the numbers of partitions of $n$ into exactly $d$ columns and $m$ parity blocks, where the last block is positive and negative, respectively. The main result of this paper is stated in the following theorem.

\begin{theorem} \label{thm:main}
For $d\ge m\ge 1$,
\begin{equation} \label{eq:mainP}
\sum_{n=1}^{\infty} a^+_{m}(n;d) q^n =
\frac{q^{d^2+d+\binom{m}{2}}}{(q;q)_{2d}} \frac{1-q^m}{1-q^d} \qbin{2d}{d+m}
\end{equation}
and
\begin{equation} \label{eq:mainN}
\sum_{n=1}^{\infty} a^-_{m}(n;d) q^n =
\frac{q^{d^2+\binom{m}{2}}}{(q;q)_{2d}} \frac{1-q^m}{1-q^d} \qbin{2d}{d+m}.
\end{equation}
\end{theorem}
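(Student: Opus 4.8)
The plan is to remove the strictness from the Frobenius entries, reinterpret the data as a pair of ordinary partitions governed by a sign pattern, and then split that pair into a ``free'' partition (accounting for $1/(q;q)_{2d}$) and a lattice path (accounting for the remaining monomial and the Gaussian binomial). Concretely, I would substitute $x_i=u_i+(d-i)$ and $y_i=v_i+(d-i)$. This turns the strict chains $x_1>\cdots>x_d\ge 0$ and $y_1>\cdots>y_d\ge 0$ into weakly decreasing sequences $u_1\ge\cdots\ge u_d\ge 0$ and $v_1\ge\cdots\ge v_d\ge 0$, i.e.\ two partitions with at most $d$ parts. Since $\sum_i(d-i)=\binom d2$, the weight becomes $n=d^2+|u|+|v|$, which produces the prefactor $q^{d^2}$; and because $x_i-y_i=u_i-v_i$, column $i$ is positive exactly when $u_i>v_i$ and negative exactly when $u_i\le v_i$, the tie $u_i=v_i$ falling on the negative side. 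Thus Theorem~\ref{thm:main} reduces to proving
\begin{equation*}
\sum_{(u,v)} q^{|u|+|v|}=\frac{q^{\,d+\binom m2}(1-q^m)}{(1-q^d)\,(q;q)_{2d}}\qbin{2d}{d+m},
\end{equation*}
where the sum runs over all pairs $(u,v)$ of partitions with at most $d$ parts whose sign sequence $u_i-v_i$ forms exactly $m$ parity blocks with positive terminal block, together with the companion identity (smaller by $q^d$) for the negative case.

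\emph{From pairs to paths.} For a fixed sign sequence $\varepsilon$ the row relations $u_i\ge u_{i+1}$, $v_i\ge v_{i+1}$ and the column relations ($u_i>v_i$ on positive columns, $v_i\ge u_i$ on negative ones) describe exactly the $P$-partitions of a $2\times d$ grid poset whose horizontal covers are weak and whose vertical covers are strict or weak according to $\varepsilon$. Stanley's $P$-partition theory then gives
\begin{equation*}
\sum_{(u,v)\sim\varepsilon} q^{|u|+|v|}=\frac{1}{(q;q)_{2d}}\sum_{w\in\mathcal L(\varepsilon)} q^{\maj(w)},
\end{equation*}
where $\mathcal L(\varepsilon)$ is the set of linear extensions and $\maj$ records descents at the strict covers. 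This accounts for the common denominator $(q;q)_{2d}$ (equivalently, a free partition into at most $2d$ parts) and isolates the entire problem in the numerator, which I would read combinatorially: a linear extension is a shuffle of the two rows, and the relative order of the $i$-th entries together with $\varepsilon$ records, step by step, whether the running difference between the two rows is positive or not.

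\emph{Main obstacle.} What remains, and where I expect the real work, is to sum the numerator over all $\varepsilon$ with $m$ blocks and positive terminal block and to prove
\begin{equation*}
\sum_{\varepsilon}\ \sum_{w\in\mathcal L(\varepsilon)} q^{\maj(w)}=q^{\,d+\binom m2}\,\frac{1-q^m}{1-q^d}\,\qbin{2d}{d+m}.
\end{equation*}
I would interpret the left side as the area generating function of nonnegative lattice paths of $2d$ steps that end at height $2m$ (hence $d+m$ up-steps and $d-m$ down-steps), the number of parity blocks corresponding to the terminal height and the ban on passing below the axis. The factor $\qbin{2d}{d+m}$ is then the area count of such paths ignoring positivity, and the ratio $\tfrac{1-q^m}{1-q^d}$ with the shift $q^{\binom m2}$ is exactly the $q$-analogue of the ballot (cycle-lemma) correction that enforces nonnegativity; indeed at $q=1$ the right side is $\tfrac md\binom{2d}{d+m}$, the classical ballot number. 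The delicate points are to make the encoding ``(sign pattern, linear extension) $\mapsto$ path'' precise and weight-preserving, and to track how the weak covers (ties) interact with the area/major-index bookkeeping; this is what forces the careful use of $\maj$ rather than a naive step count, and it is the heart of the argument.

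\emph{Finishing and checks.} The negative case is identical except that the terminal boundary convention is reversed, which uniformly lowers the relevant statistic by $d$ and replaces $q^{d^2+d+\binom m2}$ by $q^{d^2+\binom m2}$; this asymmetry is forced by the rank-$0$ tie being counted as negative and is the sole source of the factor $q^d$ between \eqref{eq:mainP} and \eqref{eq:mainN}. I would verify the whole chain against $d=1$, where the counts are $\lfloor n/2\rfloor$ and $\lceil n/2\rceil$, and against $m=d$, where every block has length one, the $2\times d$ poset degenerates to a single chain with a unique linear extension, and the numerator collapses to the bare monomial $q^{\,d+\binom d2}$, both matching the claimed formulas.
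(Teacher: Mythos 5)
Your first half is sound and is essentially the paper's own opening move: the substitution $x_i=u_i+(d-i)$, $y_i=v_i+(d-i)$ is the paper's passage to ``Frobenius arrays'' (producing $q^{d^2}$), and invoking Stanley's $(P,\omega)$-partition theorem for a fixed sign pattern is what the paper does after first \emph{flipping} the negative blocks. One small inaccuracy there: in a negative column the \emph{direction} of the vertical cover must be reversed, not merely its strictness, so your ``$2\times d$ grid poset whose vertical covers are strict or weak according to $\varepsilon$'' is not literally a labeled grid; the flip turns the data into a partition on the staircase poset $S_\beta\cong(\bm 2\times\bm b_{\bm 1})\oplus\cdots\oplus(\bm 2\times \bm b_{\bm m})$, and the paper's row-subtraction of $\lfloor\frac{m+2-i}{2}\rfloor$ removes the remaining strictness at the $\beta$-dependent cost $q^{r_1+\cdots+r_m}$, which is the source of the shifts you bundle into $q^{d+\binom m2}$. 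These are repairable details, and your reduction to the numerator identity, together with your $d=1$ and $m=d$ checks, is correct.

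The genuine gap is that your displayed identity $\sum_{\varepsilon}\sum_{w\in\Lcal(\varepsilon)}q^{\maj(w)}=q^{d+\binom m2}\frac{1-q^m}{1-q^d}\qbin{2d}{d+m}$ --- which you yourself call the heart of the argument --- is asserted, not proved, and the route you sketch is doubtful as stated. There is no off-the-shelf $q$-cycle lemma for $\maj$: cycling a word does not shift $\maj$ uniformly, so the classical ballot correction does not $q$-deform automatically. Moreover you slide between ``area'' and ``maj'': on unrestricted words the two are equidistributed (both give $\qbin{2d}{d+m}$), but on nonnegativity-restricted paths they are \emph{not} --- Carlitz's area $q$-Catalan differs from the F\"urlinger--Hofbauer maj $q$-Catalan $\frac{1-q}{1-q^d}\qbin{2d}{d+1}$, and it is the latter that occurs here --- so an ``area count with ballot correction'' argument would prove the wrong theorem unless the statistic is handled exactly. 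You also never construct the claimed weight-preserving encoding of the block count as terminal height $2m$. The paper fills precisely this hole differently: it encodes the $m$ blocks as $m-1$ \emph{marked returns} on a Dyck path ending at height $0$, introduces the statistic $\vmr(D)=\maj(D)-\frac12\sum x_i$ (summed over marked returns) to absorb the $\beta$-dependent shifts, proves $\sum_{D\in\Dcal^{(r)}_{s,t}}q^{\vmr(D)}=q^{\binom{r+1}{2}}\qbin{s+t}{s+r}$ by induction on the last one or two steps (Lemma~\ref{lem:maD}), and extracts ``exactly $m-1$ marks'' by a difference (Corollary~\ref{coro:maE}). Until you supply an analogue of that lemma --- an induction, a reflection argument compatible with $\maj$, or an explicit bijection to your height-$2m$ ballot paths --- the proposal is incomplete at its central step; a $q=1$ verification does not substitute for it.
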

Here and in the sequel, we employ the customary $q$-series notation:
\begin{align*}
(a;q)_n&:=(1-a)(1-aq)\cdots (1-aq^{n-1}),\\
(a;q)_{\infty}&:= \lim_{n\to \infty} (a;q)_n, \quad |q|<1,\\
\qbin{n}{k}&:=
\begin{cases}
\frac{(q;q)_n}{(q;q)_{k}(q;q)_{n-k}}, & \text{if $n\ge k\ge 0$},\\
0, & \text{otherwise}.
\end{cases}
\end{align*}
Surprisingly, when $m=1$, the right-hand side of \eqref{eq:mainP} involves a $q$-analogue of the Catalan numbers, namely
\begin{equation}
\frac{(1-q)}{(1-q^d)} \qbin{2d}{d+1}. \label{qcatalan}
\end{equation}
These $q$-Catalan numbers enumerate the major index of Catalan words \cite{furlingerhofbauer}. Thus, to prove Theorem~\ref{thm:main},  we will  generalize Catalan words in Section~\ref{sec:PP}.


For a positive integer $m$, let $a^+_m(n)$  and $a^-_m(n)$  be the numbers of partitions of $n$ into $m$ parity blocks such that the last block is positive and negative, respectively. 
Then their generating functions  are as follows.

\begin{theorem} \label{thm1.2}
For $m\ge 1$, 
\begin{equation*}
\sum_{n=1}^{\infty} a^+_m(n) q^n =(-1)^{m}+
\frac{(-1)^{m-1}}{(q;q)_{\infty}} \sum_{l=0}^{m-1}(-1)^{l}
q^{\frac{l(3l+1)}{2}}(1-q^{2l+1})
\end{equation*}
and
\begin{equation*}
\sum_{n=1}^{\infty} a^-_m(n) q^n =(-1)^{m}+
\frac{(-1)^{m-1}}{(q;q)_{\infty}} \sum_{l=0}^{m-1}(-1)^l
q^{\frac{l(3l-1)}{2}}(1-q^{4l+2}).
\end{equation*}
\end{theorem}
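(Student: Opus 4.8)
The plan is to deduce Theorem~\ref{thm1.2} from the main theorem by summing over the number of columns. Since every parity block occupies at least one column, a partition counted by $a^+_m(n)$ (resp.\ $a^-_m(n)$) has exactly $d$ columns for some $d\ge m$, whence $a^+_m(n)=\sum_{d\ge m}a^+_m(n;d)$ and $a^-_m(n)=\sum_{d\ge m}a^-_m(n;d)$. Writing $F^\pm_m(q):=\sum_{n\ge1}a^\pm_m(n)q^n$ and using $\qbin{2d}{d+m}=(q;q)_{2d}\big/\big((q;q)_{d+m}(q;q)_{d-m}\big)$, summation of the two identities of Theorem~\ref{thm:main} over all $d\ge m$ collapses everything to the single series
\begin{equation*}
F^\pm_m(q)=(1-q^m)\,q^{\binom{m}{2}}\sum_{d\ge m}\frac{q^{d^2+\varepsilon d}}{(1-q^d)\,(q;q)_{d+m}\,(q;q)_{d-m}},
\end{equation*}
where $\varepsilon=1$ in the positive case and $\varepsilon=0$ in the negative case. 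The whole theorem therefore rests on evaluating these two $d$-sums in closed form.

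Next I would unwind the target. Setting $g_k:=k(3k-1)/2$ for the generalized pentagonal numbers, a direct check gives $q^{l(3l+1)/2}(1-q^{2l+1})=q^{g_{-l}}-q^{g_{l+1}}$ and $q^{l(3l-1)/2}(1-q^{4l+2})=q^{g_l}-q^{g_{-(l+1)}}$, so the right-hand sides of Theorem~\ref{thm1.2} are, apart from the prefactor $(-1)^{m-1}/(q;q)_\infty$ and the additive constant $(-1)^m$, \emph{truncations} of Euler's pentagonal series $\sum_k(-1)^kq^{g_k}=(q;q)_\infty$. This recasts the needed evaluation as an instance of a truncated pentagonal number theorem in the spirit of Andrews and Merca. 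The cleanest route to a proof is then to establish the recurrences
\begin{equation*}
F^+_m(q)+F^+_{m-1}(q)=\frac{q^{g_{-(m-1)}}-q^{g_m}}{(q;q)_\infty},\qquad F^-_m(q)+F^-_{m-1}(q)=\frac{q^{g_{m-1}}-q^{g_{-m}}}{(q;q)_\infty},
\end{equation*}
together with the base cases $m=1$ (themselves $d$-sum evaluations), and then to close the argument by induction: the claimed closed forms are readily checked to satisfy both recurrences, and the additive term $(-1)^m$ is precisely what makes the telescoping self-consistent, as one verifies already at $m=2$.

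The main obstacle is the $q$-series evaluation itself. Each summand of the $d$-sum contains only finite $q$-Pochhammer symbols, yet the answer carries the infinite product $(q;q)_\infty$ in the denominator, so a genuine infinite summation must be performed rather than a mere algebraic simplification; in particular the recurrences above cannot be obtained by a term-by-term boundary cancellation in the $d$-sum. To carry this out I would substitute $d=m+j$ to exhibit the summand as an explicit power series in $q$, and then either (i) apply a Rogers--Fine or Bailey-type transformation to resum the series as $1/(q;q)_\infty$ times a theta-like tail, or (ii) form the combination $F^\pm_m+F^\pm_{m-1}$ and reduce its $j$-sum to a telescoping family whose surviving tail is a single pentagonal pair. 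Verifying that this tail is exactly $q^{g_{-(m-1)}}-q^{g_m}$ (resp.\ $q^{g_{m-1}}-q^{g_{-m}}$), with the correct sign and no spurious terms, is the delicate point, and it is there that the structural coincidence between the $q$-binomial sum of Theorem~\ref{thm:main} and Euler's pentagonal identity is actually exploited.
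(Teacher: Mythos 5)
Your reduction is set up correctly ($a^\pm_m(n)=\sum_{d\ge m}a^\pm_m(n;d)$, the collapsed $d$-sum, the rewriting of the right-hand sides as truncations of Euler's pentagonal series, and the consistency of your recurrences $F^+_m+F^+_{m-1}=(q^{g_{-(m-1)}}-q^{g_m})/(q;q)_\infty$ with the claimed closed forms for $m\ge 2$ all check out), but the proof has a genuine gap at exactly the point you flag as ``the delicate point'': you never actually evaluate the $d$-sums. Proving the base case $m=1$, i.e.\ $\sum_{d\ge 1}\frac{(1-q)\,q^{d^2+d}}{(1-q^d)(q;q)_{d+1}(q;q)_{d-1}}=-1+\frac{1-q}{(q;q)_\infty}$, and proving the recurrences for the actual sums, are each nontrivial $q$-series identities, and your proposal offers only a menu of possible techniques (Rogers--Fine, Bailey-type transformation, an unspecified telescoping of the $j$-sum) without executing any of them. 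Note also that within the paper's logic your target identity is precisely Corollary~\ref{euler}, which is \emph{derived from} Theorem~\ref{thm1.2} together with Theorem~\ref{thm:main}; so assuming it, or any equivalent form of it, would be circular, and proving it directly amounts to a new analytic proof of an Andrews--Merca-type truncated pentagonal number theorem --- a substantive task, not a routine verification. (Citing Andrews--Merca's theorem would not close the gap either: that gives the generating function for $M_m(n)$, and the identification $a^+_m(n)=M_m(n)$ is itself something the paper obtains only as a consequence of Theorem~\ref{thm1.2}.)

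For contrast, the paper's proof runs in the opposite direction and is combinatorial, making no use of Theorem~\ref{thm:main} at all. It invokes Theorem~\ref{thm5.1} (from \cite{sy16}) on singular overpartitions, which interprets $p\bigl(n-(3m^2-m)/2\bigr)$ and $p\bigl(n-(3m^2+m)/2\bigr)$ as counting partitions whose parity-block sequence \emph{begins} with prescribed alternating patterns of $P$'s and $N$'s; differences of two such counts then enumerate partitions whose pattern is \emph{exactly} a given alternating string, and an induction on $m$ (with the base case $a^+_1(n)=p(n)-p(n-1)$) telescopes these to the stated alternating sums. If you want to salvage your route, the honest path is to supply a full proof of the two recurrences for the explicit sums $\sum_{d\ge m}\frac{q^{d^2+\varepsilon d}}{(1-q^d)(q;q)_{d+m}(q;q)_{d-m}}$; as written, the argument establishes only that the theorem is \emph{equivalent} to those unproven evaluations.
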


\bigskip

Note that
\begin{align*}
a^+_m(n) &=\sum_{d= 1}^{\infty} a^+_{m}(n;d),\\
a^-_m(n) &=\sum_{d= 1}^{\infty} a^-_{m}(n;d).
\end{align*}
Hence, the following corollary is immediate by Theorems~\ref{thm:main} and \ref{thm1.2}.


\begin{corollary} \label{euler}
For $m\ge1$,
\begin{equation}\label{eq:AM}
\frac{1}{(q;q)_{\infty}} \sum_{l=0}^{m-1}(-1)^{l}
q^{\frac{l(3l+1)}{2}}(1-q^{2l+1})=
1+(-1)^{m-1}\sum_{d=1}^{\infty}
\frac{q^{d^2+d+\binom{m}{2}}}{(q;q)_{2d}} \frac{1-q^m}{1-q^d}\qbin{2d}{d+m}
\end{equation}
and
\begin{equation*}
\frac{1}{(q;q)_{\infty}} \sum_{l=0}^{m-1}(-1)^{l}
q^{\frac{l(3l-1)}{2}}(1-q^{4l+2})=
1+(-1)^{m-1}\sum_{d=1}^{\infty}
\frac{q^{d^2+\binom{m}{2}}}{(q;q)_{2d}} \frac{1-q^m}{1-q^d}\qbin{2d}{d+m}.
\end{equation*}
\end{corollary}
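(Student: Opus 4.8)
The plan is to obtain both identities directly by combining Theorems~\ref{thm:main} and~\ref{thm1.2} through the relations $a^+_m(n)=\sum_{d\ge 1}a^+_m(n;d)$ and $a^-_m(n)=\sum_{d\ge 1}a^-_m(n;d)$ recorded immediately above the corollary. First I would observe that a partition of $n$ has at most $n$ columns in its Frobenius symbol, so for each fixed $n$ the inner sum is finite; working in the ring of formal power series (equivalently, using that all the coefficients are nonnegative), this legitimizes interchanging the order of summation, so that
\[
\sum_{n=1}^{\infty} a^+_m(n)\,q^n=\sum_{d=1}^{\infty}\left(\sum_{n=1}^{\infty}a^+_m(n;d)\,q^n\right).
\]

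Next I would substitute the closed form \eqref{eq:mainP} into each inner generating function on the right, and the first identity of Theorem~\ref{thm1.2} on the left, yielding
\[
(-1)^{m}+\frac{(-1)^{m-1}}{(q;q)_{\infty}}\sum_{l=0}^{m-1}(-1)^{l}q^{\frac{l(3l+1)}{2}}(1-q^{2l+1})=\sum_{d=1}^{\infty}\frac{q^{d^2+d+\binom{m}{2}}}{(q;q)_{2d}}\frac{1-q^m}{1-q^d}\qbin{2d}{d+m}.
\]
Moving the constant $(-1)^m$ to the right-hand side, rewriting $-(-1)^m=(-1)^{m-1}$, and multiplying through by $(-1)^{m-1}$ (using $((-1)^{m-1})^2=1$) then produces exactly \eqref{eq:AM}. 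The negative case is proved in the same way, replacing \eqref{eq:mainP} by \eqref{eq:mainN} and the first identity of Theorem~\ref{thm1.2} by the second.

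Since the corollary is essentially a formal consequence of the two theorems, the only step requiring genuine care is the justification of the interchange of the double sum (and the implicit claim that the series over $d$ converges for $|q|<1$); everything else is sign bookkeeping. I do not expect any substantive obstacle beyond this, so the hard part is really upstream, in establishing Theorems~\ref{thm:main} and~\ref{thm1.2} themselves.
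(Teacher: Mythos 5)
Your proposal is correct and matches the paper's own argument: the paper likewise declares the corollary immediate from summing Theorem~\ref{thm:main} over $d$ via $a^{\pm}_m(n)=\sum_{d\ge 1}a^{\pm}_m(n;d)$ and equating with Theorem~\ref{thm1.2}, the interchange of sums being harmless in formal power series since each coefficient involves only finitely many $d$. Your sign bookkeeping, moving $(-1)^m$ across and multiplying by $(-1)^{m-1}$, is exactly right.
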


Corollary~\ref{euler} suggests some connection to the work of Andrews and M. Merca on the truncated pentagonal number theorem \cite{gea288}.  Let $M_m(n)$ be the number of partitions of $n$ in which $m$ is the least integer that is not a part and there are more parts $> m$ than there are $< m$. Andrews and Merca showed that the generating function for $M_m(n)$ is the truncated version of the Euler pentagonal number theorem, namely the left-hand side of the identity \eqref{eq:AM}.
Thus we see that $a_{m}^{+}(n)=M_m(n)$.


%

For a positive integer $d$, let $a^+(n;d)$  and $a^-(n;d)$  be the numbers of partitions of $n$ into $d$ columns such that the last block is positive and negative, respectively. 
Then their generating functions are as follows.

\begin{theorem} \label{thm1.4}
For $d\ge 1$,
\begin{equation*}
\sum_{n=1}^{\infty} a^+(n;d) q^n = \frac{q^{d^2+d}}{(q;q)_d^2 \, (1+q^d)}
\end{equation*}
and
\begin{equation*}
\sum_{n=1}^{\infty} a^-(n;d) q^n = \frac{q^{d^2}}{(q;q)_d^2 \,(1+q^d)}.
\end{equation*}
\end{theorem}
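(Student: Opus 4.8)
The plan is to prove Theorem~\ref{thm1.4} directly from the structure of Frobenius symbols. One could instead sum \eqref{eq:mainP} and \eqref{eq:mainN} over $m=1,\dots,d$ (which is legitimate, since a symbol with $d$ columns has between $1$ and $d$ parity blocks, so that $a^{\pm}(n;d)=\sum_{m=1}^{d}a^{\pm}_m(n;d)$), but that route leaves a nontrivial $q$-binomial identity, equivalent to
\begin{equation*}
\sum_{m=1}^{d} q^{\binom{m}{2}}(1-q^m)\qbin{2d}{d+m}=\frac{1-q^d}{1+q^d}\qbin{2d}{d},
\end{equation*}
to be established separately. I prefer the direct argument, which moreover explains the factor $1/(1+q^d)$ combinatorially.

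A partition with exactly $d$ columns is a Frobenius symbol with strictly decreasing rows $x_1>\cdots>x_d\ge0$ and $y_1>\cdots>y_d\ge0$ and weight $n=d+\sum_i x_i+\sum_i y_i$; its last block is positive exactly when $x_d>y_d$ and negative exactly when $x_d\le y_d$. Writing $x_i=\alpha_i+(d-i)$ and $y_i=\beta_i+(d-i)$ turns the strict inequalities into weak ones, so that $\alpha=(\alpha_1\ge\cdots\ge\alpha_d\ge0)$ and $\beta=(\beta_1\ge\cdots\ge\beta_d\ge0)$ are arbitrary partitions into at most $d$ parts; one computes $n=d^2+|\alpha|+|\beta|$, while $x_d=\alpha_d$ and $y_d=\beta_d$ leave the sign condition as $\alpha_d>\beta_d$ (positive) or $\alpha_d\le\beta_d$ (negative). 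Thus, up to the prefactor $q^{d^2}$, I must split the generating function $1/(q;q)_d^2$ for pairs $(\alpha,\beta)$ according to the sign of $\alpha_d-\beta_d$.

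The key step is to peel off the common part $c=\min(\alpha_d,\beta_d)$: subtracting $c$ from every part of both $\alpha$ and $\beta$ is a weight-preserving bijection (contributing $q^{2cd}$, hence a factor $1/(1-q^{2d})$) onto pairs $(\alpha',\beta')$ with $\min(\alpha'_d,\beta'_d)=0$, and it preserves $\alpha_d-\beta_d=\alpha'_d-\beta'_d$. Among the reduced pairs, the negative case $\alpha'_d\le\beta'_d$ forces $\alpha'_d=0$, so $\alpha'$ has at most $d-1$ parts and $\beta'$ at most $d$ parts, with generating function $1/\big((q;q)_{d-1}(q;q)_d\big)$; the positive case $\alpha'_d>\beta'_d$ forces $\beta'_d=0$ and $\alpha'_d\ge1$, so $\beta'$ has at most $d-1$ parts while $\alpha'$ has exactly $d$ positive parts (generating function $q^d/(q;q)_d$ after subtracting $1$ from each of its $d$ parts), giving $q^d/\big((q;q)_{d-1}(q;q)_d\big)$.

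Assembling the pieces and simplifying with $(q;q)_d=(1-q^d)(q;q)_{d-1}$ and $1-q^{2d}=(1-q^d)(1+q^d)$ collapses $1/\big((1-q^{2d})(q;q)_{d-1}(q;q)_d\big)$ to $1/\big((1+q^d)(q;q)_d^2\big)$, which yields the two claimed formulas after restoring the prefactor $q^{d^2}$. I expect the only delicate point to be the case analysis for the reduced pairs: one must check that after subtracting $c$ the sign condition splits exactly as negative $\Leftrightarrow\alpha'_d=0$ and positive $\Leftrightarrow\alpha'_d\ge1$ (the latter then forcing $\beta'_d=0$), so that the two cases are disjoint and exhaust all pairs with $\min(\alpha'_d,\beta'_d)=0$; correctly tracking ``at most $d-1$ parts'' versus ``exactly $d$ positive parts'' is what produces the asymmetric powers $q^{d^2}$ and $q^{d^2+d}$.
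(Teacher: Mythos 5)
Your proposal is correct and is essentially the paper's own argument: the paper likewise peels off the minimal entry of the last column (in the positive case subtracting $h=y_d$ from the bottom row and $h+1$ from the top row), which after your staircase normalization is exactly your removal of $c=\min(\alpha_d,\beta_d)$ plus the extra $1$ from each part of $\alpha'$, and it produces the same reduced generating function $q^{d^2}/\bigl((q;q)_{d-1}(q;q)_d\bigr)$ together with the geometric factor $1/(1-q^{2d})$ that collapses to $1/\bigl((q;q)_d^2(1+q^d)\bigr)$. The only cosmetic difference is that you handle both signs uniformly on pairs $(\alpha,\beta)$ of partitions, whereas the paper works on the Frobenius symbol directly and dispatches the negative case ``similarly.''
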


Note that
\begin{align*}
a^+(n;d) &:=\sum_{m=1}^d a^+_{m}(n;d),\\
a^-(n;d) &:=\sum_{m=1}^d a^-_{m}(n;d).
\end{align*}
The following corollary follows from Theorems~\ref{thm:main} and \ref{thm1.4}.
\begin{corollary}
For $d\ge 1$,
\begin{equation*}
\sum_{m=1}^{d} q^{\binom{m}{2}}(1-q^m) \qbin{2d}{d+m}=\frac{1-q^d}{1+q^d} \qbin{2d}{d}.
\end{equation*}

%
\end{corollary}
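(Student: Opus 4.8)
The plan is to combine the generating function for $a^+(n;d)$ from Theorem~\ref{thm1.4} with the sum over $m$ of the generating functions from Theorem~\ref{thm:main}, invoking the decomposition $a^+(n;d)=\sum_{m=1}^d a^+_m(n;d)$ recorded just above the corollary. Summing \eqref{eq:mainP} over $m$ from $1$ to $d$ and equating the result to the expression in Theorem~\ref{thm1.4} gives
\begin{equation*}
\sum_{m=1}^d \frac{q^{d^2+d+\binom{m}{2}}}{(q;q)_{2d}}\,\frac{1-q^m}{1-q^d}\,\qbin{2d}{d+m}
=\frac{q^{d^2+d}}{(q;q)_d^2\,(1+q^d)}.
\end{equation*}

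Next I would pull out the factor $\dfrac{q^{d^2+d}}{(q;q)_{2d}(1-q^d)}$, which is independent of $m$, from every term on the left. Cancelling the common $q^{d^2+d}$ on both sides and multiplying through by $(q;q)_{2d}(1-q^d)$ reduces the identity to
\begin{equation*}
\sum_{m=1}^d q^{\binom{m}{2}}(1-q^m)\,\qbin{2d}{d+m}
=\frac{(q;q)_{2d}\,(1-q^d)}{(q;q)_d^2\,(1+q^d)}.
\end{equation*}

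Finally I would rewrite the right-hand side using the definition of the Gaussian binomial coefficient: since $\dfrac{(q;q)_{2d}}{(q;q)_d^2}=\qbin{2d}{d}$, the right-hand side equals $\dfrac{1-q^d}{1+q^d}\qbin{2d}{d}$, which is precisely the claimed identity. There is no genuine obstacle here, as the combinatorial substance has already been delivered by Theorems~\ref{thm:main} and \ref{thm1.4}; the only point that needs care is the bookkeeping of the $q$-powers and the $(q;q)$-factors when extracting the common factor from the sum and when simplifying the quotient into a single Gaussian binomial coefficient.
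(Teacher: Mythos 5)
Your proposal is correct and matches the paper's argument exactly: the paper derives this corollary precisely by summing \eqref{eq:mainP} over $m=1,\dots,d$ via the decomposition $a^+(n;d)=\sum_{m=1}^d a^+_m(n;d)$, equating with Theorem~\ref{thm1.4}, and simplifying $\frac{(q;q)_{2d}}{(q;q)_d^2}=\qbin{2d}{d}$. Your bookkeeping of the common factor $\frac{q^{d^2+d}}{(q;q)_{2d}(1-q^d)}$ and the final simplification are both accurate.
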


This paper is organized as follows. In Section~\ref{sec2}, necessary definitions on lattice paths are reviewed and relevant lemmas are proved. In Section~\ref{sec:PP}, preliminaries on posets and partitions associated with posets are given. Our main result in Theorem~\ref{thm:main}
will be proved in Section~\ref{sec4}, and Theorems~\ref{thm1.2} and \ref{thm1.4} will be proved in Section~\ref{sec5}.

\section{Lattice paths}\label{sec2}

Let $S$ be a subset of $\setZ\times \setZ$.  A lattice path in $\setZ\times \setZ$ of length $m$ with steps in $S$ is a sequence $v_0,v_1,\ldots, v_m $ in  $\setZ\times \setZ$ such that each consecutive  difference $v_i-v_{i-1}$ lies in $S$. We say that the path goes from $v_0$ to $v_m$.  A ballot path from $(0,0)$ to $(s+t,s-t)$ is a lattice path with steps $(1,1)$ and $(1,-1)$, with the additional condition that it never passes below the $x$-axis.  A Dyck path of length $2n$ is a ballot path with $s=t=n$.

For a ballot path $D=v_0,v_1,\ldots, v_m$, if $v_i-v_{i-1}=(1,-1)$ and $v_{i+1}-v_{i}=(1,1)$, then $v_i$ is called a valley.  If $v_i$ is a valley on the $x$-axis, then it is called a return. If $v_i$ is a return,
then it may be overlined and called a marked return.

For convenience, we also denote steps $(1,1)$ and $(1,-1)$ by $\rm u$ and $\rm d$, respectively, and then write a path  as a word of steps $\rm u$ and $\rm d$ accordingly.  Here, $\rm u$ and $\rm d$ stand for up-step and down-step, respectively.  If there is a marked return, then we put a vertical bar between the steps $\rm d$ and $\rm u$ associated with the return.  For instance, let $D=(0,0), (1,1), (2,0), (3,1), \overline{(4,0)}, (5,1)$. Then it is a ballot path from $(0,0)$ to $(5,1)$ and the corresponding word is $\rm udud|u$.

For nonnegative integers $s$, $t$, and $r$, let $\Dcal_{s,t}^{(r)}$ be the set of ballot paths from $(0,0)$ to $(s+t,s-t)$ with at least $r$ marked returns.
For a path $D \in \Dcal_{s,t}^{(r)}$, we define $\maj(D)$ by
\begin{equation*}
\maj(D):=\sum_{v_i} x_i,
\end{equation*}
where the sum is over all valleys $v_i$ of $D$ and $x_i$ is the $x$-coordinate of $v_i$.
For a path $D \in \Dcal_{s,t}^{(r)}$, we define a weight $\vmr(D)$ by
\begin{equation*}
\vmr(D):=\maj(D) -\frac{1}{2} \sum_{\overline{v_i}} x_i,
\end{equation*}
where the sum is over all marked returns $\overline{v_i}$ of $D$, and $x_i$ is the $x$-coordinate of $\overline{v_i}$.

\begin{lemma} \label{lem:maD}
For nonnegative integers $s$, $t$ with $s>t$, and $r$,
\begin{equation} \label{eqn:vmrD}
\sum_{D \in \Dcal_{s,t}^{(r)}} q^{\vmr(D)} = q^{\binom{r+1}{2}}\qbin{s+t}{s+r}.
\end{equation}
\end{lemma}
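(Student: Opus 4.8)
The plan is to reduce the $\vmr$‑weighted count of marked ballot paths to a pure major–index enumeration of \emph{unrestricted} lattice words, for which the answer is a Gaussian binomial by MacMahon's theorem, and then to absorb the marking by a reflection argument. Throughout I use that $s>t$ forces the path to end strictly above the axis, so its final ascent is genuine and unambiguous.

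First I would reinterpret $\vmr$ and the marking through a reflection map. Given a marked ballot path $D$, scan it from right to left and maintain a reflection mode that is toggled at every marked return; reflect across the $x$‑axis exactly those maximal blocks of steps lying between consecutive marked returns (and the initial block, when the number of marks is odd), while leaving the final ascent fixed. Every reflected block is balanced, as it runs between two axis points, so this produces an unrestricted word $w$ with the same number of $\mathrm u$'s and $\mathrm d$'s; one checks that the marked returns become precisely the axis‑crossings of $w$ (points where $w$ passes from one side of the axis to the other), while unmarked returns become axis‑touchings that stay on one side. The crucial computation is that this toggling reflection changes the major index by exactly $-\tfrac12\sum x_i$ over the marked returns, so that $\maj(w)=\vmr(D)$. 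This gives a bijection identifying $\Dcal_{s,t}^{(r)}$ with the set of words having $s$ $\mathrm u$'s, $t$ $\mathrm d$'s and at least $r$ axis‑crossings, and reduces the lemma to
\[
\sum_{\substack{w:\ s\ \mathrm u,\ t\ \mathrm d\\ \#\mathrm{cross}(w)\ge r}} q^{\maj(w)}=q^{\binom{r+1}{2}}\qbin{s+t}{s+r}.
\]

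For the base case $r=0$ there is no crossing condition, so the left side is the major‑index generating function of all rearrangements of $s$ $\mathrm u$'s and $t$ $\mathrm d$'s, which equals $\qbin{s+t}{s}$ by MacMahon's theorem. For general $r$ I would peel off the crossing condition by an André‑type reflection: locate the $r$th crossing and reflect a suitable portion so that ``at least $r$ crossings, ending at height $s-t$'' is converted into ``all paths ending at the shifted height $s-t+2r$,'' i.e. words with $s+r$ $\mathrm u$'s and $t-r$ $\mathrm d$'s, whose major‑index generating function is $\qbin{s+t}{s+r}$; the $\maj$‑shifts from the $r$ successive reflections should telescope to the prefactor $q^{\binom{r+1}{2}}$. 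Equivalently one may argue by induction on $r$, verifying that both sides satisfy the ratio $F^{(r)}=q^{\,r}\,\dfrac{1-q^{t-r+1}}{1-q^{s+r}}\,F^{(r-1)}$ forced by the right‑hand side.

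The main obstacle is the count‑changing step: passing from words with the split $(s,t)$ and at least $r$ crossings to words with the split $(s+r,t-r)$ and no constraint. Unlike the balanced reflections used for the $r=0$ case and for the toggling map, this step must convert $r$ of the $\mathrm d$'s into $\mathrm u$'s, so it cannot be a plain reflection across the axis; it requires carefully selecting which excursions to ``unfold'' while controlling the induced change in $\maj$ (the small cases suggest that one deletes, at each step, a specific down‑step attached to a crossing and records a decrease of one in $\maj$). A secondary point, which I expect to be routine but must be done honestly, is confirming that the toggling reflection changes $\maj$ by exactly $-\tfrac12\sum x_i$: reflecting a block simultaneously destroys the valleys at its two endpoints and interchanges interior valleys with peaks, and one must check that these contributions net to one half of each marked return's $x$‑coordinate.
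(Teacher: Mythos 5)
Your first move is sound and genuinely different from the paper's: the paper proves Lemma~\ref{lem:maD} by induction on $s+t$, removing the last one or two steps of the path and splitting into the cases $s-t\ge 2$ and $s-t=1$, with $q$-Pascal recurrences for the Gaussian binomials doing all the work; you instead propose a weight-preserving bijection followed by MacMahon. Your toggling reflection does check out, including the point you flagged: if a balanced nonnegative block occupies the interval $[a,b]$, its interior peaks and valleys interlace, so reflecting it changes the interior contribution to $\maj$ by $\sum(\text{peaks})-\sum(\text{valleys})=(a+b)/2$; each marked return at $x_i$ has exactly one adjacent block reflected and so loses its descent, contributing $-x_i$; summing over the alternately reflected blocks, the block terms add up to $\tfrac12\sum x_i$, giving a net change of $-\tfrac12\sum x_i$, i.e.\ $\maj(w)=\vmr(D)$. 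This correctly identifies $\Dcal_{s,t}^{(r)}$, weight and all, with words having $s$ up-steps, $t$ down-steps and at least $r$ sign-crossings, and the $r=0$ case is indeed MacMahon's theorem.

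The second half, however, is a genuine gap and not a routine verification. The identity you still need --- that words with split $(s,t)$ and at least $r$ crossings have $\maj$-generating function $q^{\binom{r+1}{2}}\qbin{s+t}{s+r}$ --- is, after your weight-preserving bijection, precisely the lemma restated in an equivalent model: you have transported the problem, not reduced it. No map is actually defined for the ``Andr\'e-type'' step, and, as you yourself observe, none of the obvious candidates can work: a reflection preserves the multiset of steps and so cannot change the split from $(s,t)$ to $(s+r,t-r)$; your small-case guess (``delete a down-step attached to a crossing, decrease $\maj$ by one'') fails on its face, since deletion changes the word length, comparing sets counted by $\qbin{s+t}{s+r}$ and $\qbin{s+t-1}{s+r-1}$, which are unequal; and the total shift $\binom{r+1}{2}$ forces the $r$-th peeling to shift $\maj$ by $r$, not by $1$. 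The fallback ratio $F^{(r)}=q^{\,r}\frac{1-q^{t-r+1}}{1-q^{s+r}}F^{(r-1)}$ is correctly computed from the right-hand side, but asserting that the combinatorial sum satisfies it because it is ``forced by the right-hand side'' is circular --- no decomposition of the crossing condition producing that recurrence is offered, and such a multiplicative recurrence does not correspond to any evident combinatorial operation. To complete your route you would need an honest $\maj$-controlled unfolding in the spirit of the $q$-ballot results of F\"urlinger--Hofbauer, whereas the paper's elementary induction on $s+t$ (whose three-term case analysis matches the $q$-Pascal rule exactly) avoids this entirely. As written, the proposal proves only half of the lemma.
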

\begin{proof}
We will prove by induction on $s+t$.
If $s+t=1$, then $s=1$ and $t=0$. Thus there is only one ballot path from $(0,0)$ to $(1,0)$ and
 we have
$$
\sum_{D\in \Dcal_{1,0}^{(r)}} q^{\vmr(D)} = \delta_{r,0}= q^{\binom{r+1}{2}}\qbin{1}{1+r}.
$$
Thus \eqref{eqn:vmrD} holds for $s+t=1$.

For a positive integer $n\ge 2$, we assume that \eqref{eqn:vmrD} holds for all $s+t<n$.
We now consider $s+t=n$. Let $D$ be a path from $\Dcal^{(r)}_{s,t}$, $D'$ be the path resulting from removing the last step in $D$, and $D''$ be the path resulting from removing the last two steps in $D$, namely
\begin{align}
D&=v_0, v_1,\ldots, v_{s+t-2}, v_{s+t-1}, v_{s+t},  \label{D}\\
D' & =v_0,v_1,\ldots, v_{s+t-2}, v_{s+t-1},   \label{D'} \\
D'' &=v_0,v_1,\ldots, v_{s+t-2}.  \label{D''}
\end{align}
Note that $v_{s+t}=(s+t, s-t)$.  We divide $s-t$ into two cases: $s-t\ge 2$ and $s-t=1$.

\begin{itemize}
\item Case 1: $s-t\ge2$.
\begin{itemize}
\item
Suppose that the last step in $D$ is $\rm d$.  Then $v_{s+t-1}=(s+t-1, s-t+1)$, which  cannot be a valley, so
$D' \in \Dcal^{(r)}_{s,t-1}$ and
$$
\vmr(D)=\vmr(D').
$$
By the induction hypothesis, we see that the weight sum of such paths equals
$$
q^{\binom{r+1}{2}}\qbin{s+(t-1)}{s+r}.
$$

\item
Suppose that the last two steps in $D$ are $\rm du$. Then $v_{s+t-1}=(s+t-1, s-t-1) $ is a valley. Since $s-t\ge 2$, $v_{s+t-1}$ cannot be a point on the $x$-axis, which yields that it cannot be a return. Since $v_{s+t-2}=(s+t-2, s-t)$,  $D'' \in \Dcal^{(r)}_{s-1,t-1}$ and
$$
\vmr(D)=q^{s+t-1} \vmr(D'').
$$
Hence, by the induction hypothesis, we see that the weight sum of such paths equals
$$
q^{s+t-1} q^{\binom{r+1}{2}}\qbin{(s-1)+(t-1)}{(s-1)+r}.
$$

\item Lastly, suppose that the last two steps in $D$ are $\rm uu$. Then $v_{s+t-1}$ cannot be a valley, so
$$
\vmr(D)=\vmr(D').
$$
Also, note that $D'$ is a path from $(0,0)$ to $(s+t-1,s-t-1)$ whose last step is $\rm u$, so the set of all such paths $D'$ is the set of all paths from $(0,0)$ to $(s+t-1,s-t-1)$ minus the set of all paths from $(0,0)$ to $(s+t-1,s-t-1)$ whose last step is $\rm d$.  If a path from $(0,0)$ to $(s+t-1, s-t-1)$ ends with $\rm d$, then the path cannot have a valley at the second last lattice point $(s+t-2, s-t)$.  Thus the weight of $D''$ remains the same. 
By the induction hypothesis,  we see that the weight sum of such paths whose last two steps are $\rm uu$ equals
$$
q^{\binom{r+1}{2}}\qbin{(s-1)+t}{(s-1)+r} - q^{\binom{r+1}{2}}\qbin{(s-1)+(t-1)}{(s-1)+r} = q^{\binom{r+1}{2}+t-r}\qbin{s+t-2}{s+r-2},
$$
where the first summand on the left hand side accounts for the generating function for paths $D'$ from $(0,0)$ to $(s+t-1,s-t-1)$, and the second summand accounts for the generating function for paths $D''$ from $(0,0)$ to $(s+t-2,s-t)$.
\end{itemize}
Therefore,
\begin{align*}
\sum_{D\in \Dcal_{s,t}^{(r)}} q^{\vmr(D)} &= q^{\binom{r+1}{2}}\left( \qbin{s+t-1}{s+r}+q^{s+t-1}\qbin{s+t-2}{s+r-1}+
q^{t-r}\qbin{s+t-2}{s+r-2}\right) \\
&= q^{\binom{r+1}{2}}\qbin{s+t}{s+r}.
\end{align*}

\item Case 2: $s-t=1$. Recall the paths $D$, $D'$, and $D''$ defined in \eqref{D}, \eqref{D'}, and \eqref{D''}.
\begin{itemize}
\item
Suppose that the last step in $D$ is $\rm d$. Then $v_{2s-2}=(2s-2, 2)$ cannot be a valley, so
$D' \in \Dcal^{(r)}_{s,s-2}$ and
$$
\vmr(D)=\vmr(D').
$$
By the induction hypothesis, we see that the weight sum of such paths equals
$$
q^{\binom{r+1}{2}}\qbin{s+(s-2)}{s+r}.
$$

\item
Suppose that the last step in $D$ is $\rm u$.  Then the second last step must be $\rm d$, for $s-t=1$, i.e., $v_{2s-2}=(2s-2,0)$, and $D$ is a ballot path.  Thus $v_{2s-2}$ is a return.

If $v_{2s-2}$ is not marked, then $D''\in \Dcal_{s-1,s-2}^{(r)}$ and
$$
 \vmr(D)=q^{2s-2}  \vmr(D'').
$$
Thus, by the induction hypothesis, we see that  the weight sum of  such paths equals
$$
q^{2s-2} q^{\binom{r+1}{2}}\qbin{(s-1)+(s-2)}{(s-1)+r}.
$$
If $v_{2s-2}$ is marked, then $D''\in \Dcal_{s-1,s-2}^{(r-1)}$ and
$$
 \vmr(D)=q^{s-1}  \vmr(D'').
$$
   Then, by the induction hypothesis, the weight sum of such paths equals
$$
q^{s-1} q^{\binom{r}{2}}\qbin{(s-1)+(s-2)}{(s-1)+(r-1)}.
$$
\end{itemize}
Therefore,
\begin{align*}
\sum_{D\in \Dcal_{s,s-1}^{(r)}} q^{\vmr(D)} &= q^{\binom{r+1}{2}}\left( \qbin{2s-2}{s+r}+q^{2s-2}\qbin{2s-3}{s+r-1}+
q^{s-1-r}\qbin{2s-3}{s+r-2}\right) \\
& = q^{\binom{r+1}{2}}\qbin{2s-1}{s+r}.
\end{align*}
\end{itemize}
This completes the proof.
\end{proof}

\begin{example}
There are $5$ paths in $\Dcal^{(1)}_{3,2}$ as follows:
\begin{alignat*}{2}
D_1&={\rm uudd|u} \quad&& \vmr(D_1)= (4) - \frac12 (4) =2 \\
D_2&={\rm ud|uud} \quad&& \vmr(D_2)= (2) - \frac12 (2) =1 \\
D_3&={\rm ud|udu} \quad&& \vmr(D_3)= (2+4) - \frac12 (2) =5 \\
D_4&={\rm udud|u} \quad&& \vmr(D_4)= (2+4) - \frac12 (4) =4 \\
D_5&={\rm ud|ud|u} \quad&& \vmr(D_5)= (2+4) - \frac12 (2+4) =3.
\end{alignat*}
Thus
$$
\sum_{D \in \Dcal_{3,2}^{(1)}} q^{\vmr(D)}= q^1+q^2+q^3+q^4+q^5
= q^{\binom{1+1}{2}}\qbin{3+2}{3+1}.
$$
\end{example}

We now compute the generating function for Dyck paths from $(0,0)$ to $(2s,0)$ with at least $r$ marked returns.

\begin{lemma}\label{lem:s=t}
For a positive integer $s$ and a nonnegative integer $r$,
\begin{equation} \label{eqn:s=t}
\sum_{D \in \Dcal_{s,s}^{(r)}} q^{\vmr(D)} = q^{\binom{r+1}{2}}\qbin{2s-1}{s+r}.
\end{equation}
\end{lemma}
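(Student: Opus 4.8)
The plan is to reduce the equal-endpoint case $s=t$ to the strictly-higher case already settled in Lemma~\ref{lem:maD}, via a simple bijection that strips off the forced last step.

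First I would observe that any path $D\in\Dcal_{s,s}^{(r)}$ ends at $(2s,0)$, a point on the $x$-axis. Since a ballot path never dips below the $x$-axis, the preceding vertex must be $(2s-1,1)$ and the final step must be a down-step $\rm d$. Removing this last step produces a ballot path $D'$ from $(0,0)$ to $(2s-1,1)$, that is, $D'\in\Dcal_{s,s-1}$; conversely, appending a down-step to any $D'\in\Dcal_{s,s-1}$ returns a path in $\Dcal_{s,s}$. So $D\mapsto D'$ is a bijection between the underlying ballot paths.

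Next I would check that this bijection preserves all the relevant statistics. The vertex $(2s-1,1)$ sits at height $1$, so it is never a return; in $D$ it is followed by a down-step, so it is not a valley either, and in $D'$ it is the endpoint, hence again not a valley. All earlier vertices, together with their incoming and outgoing steps, are identical in $D$ and $D'$. Consequently $D$ and $D'$ have exactly the same set of valleys and the same set of (possibly marked) returns; in particular $\maj(D)=\maj(D')$, the marked-return correction terms agree, and $\vmr(D)=\vmr(D')$. Moreover $D$ carries at least $r$ marked returns precisely when $D'$ does, so the bijection restricts to a bijection $\Dcal_{s,s}^{(r)}\leftrightarrow\Dcal_{s,s-1}^{(r)}$.

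Finally, since $s>s-1$ for every positive integer $s$, Lemma~\ref{lem:maD} applies to $\Dcal_{s,s-1}^{(r)}$ and yields
$$
\sum_{D\in\Dcal_{s,s}^{(r)}} q^{\vmr(D)}=\sum_{D'\in\Dcal_{s,s-1}^{(r)}} q^{\vmr(D')}=q^{\binom{r+1}{2}}\qbin{s+(s-1)}{s+r}=q^{\binom{r+1}{2}}\qbin{2s-1}{s+r},
$$
which is \eqref{eqn:s=t}. I expect no real obstacle here; the only point demanding care is the verification that stripping the last down-step neither creates nor destroys a valley or a return, so that $\vmr$ and the marked-return count are genuinely preserved. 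Should that bookkeeping feel delicate, an alternative is to mimic the induction of Lemma~\ref{lem:maD} directly: when $s=t$ the last step is forced to be $\rm d$ and the second-to-last vertex $(2s-1,1)$ is not a valley, so only the single ``last step is $\rm d$'' case arises, immediately returning the $\Dcal_{s,s-1}^{(r)}$ weight sum.
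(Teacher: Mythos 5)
Your proof is correct and is essentially the paper's own argument: the paper likewise notes that every path in $\Dcal_{s,s}^{(r)}$ must end with a forced down-step, that $v_{2s-1}$ cannot be a valley so $\vmr$ and the marked-return count are unchanged upon deleting that step, and then applies Lemma~\ref{lem:maD} to $\Dcal_{s,s-1}^{(r)}$. Your extra bookkeeping (checking $(2s-1,1)$ is neither a return nor a valley in either path) is just a more explicit version of the same reduction.
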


\proof
Since $v_{2s}=(2s,0)$,  each lattice path $D=v_0,v_1, \ldots, v_{2s}$ in $\Dcal_{s,s}^{(r)}$ must end with a down step $\rm d$.
Thus, $v_{2s-1}$ cannot be a valley, from which it follows that
\begin{equation*}
{\vmr(D)}={\vmr(D')}.
\end{equation*}
In addition, the number of marked returns remains the same, so  $D'=v_0, v_1, \ldots, v_{2s-1}$ is in $\Dcal_{s,s-1}^{(r)}$.
 Therefore, the proof follows from Lemma~\ref{lem:maD}.
\endproof

Let $\Ecal_{s}^{(r)}$ be the set of Dyck paths from $(0,0)$ to $(2s,0)$ with exactly $r$ marked returns.

\begin{corollary}\label{coro:maE}
For a positive integer $s$ and a nonnegative integer $r$,
\begin{equation*}
\sum_{D \in \Ecal_{s}^{(r)}} q^{\vmr(D)} = q^{\binom{r+1}{2}}\frac{1-q^{r+1}}{1-q^s}
\,\qbin{2s}{s+r+1}.
\end{equation*}
\end{corollary}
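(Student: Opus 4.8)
The plan is to pass from the ``at least $r$'' count of Lemma~\ref{lem:s=t} to the ``exactly $r$'' count defining $\Ecal_s^{(r)}$ by a telescoping argument, and then to reduce the resulting difference of two $q$-binomial coefficients to the claimed product by elementary $q$-factorial manipulations.

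First I would observe that the choice of marking is the only difference between the two families: an element of either $\Dcal_{s,s}^{(r)}$ or $\Ecal_s^{(j)}$ is a Dyck path from $(0,0)$ to $(2s,0)$ together with a choice of a subset of its returns to overline, and such an object lies in $\Dcal_{s,s}^{(r)}$ precisely when the number $j$ of overlined returns satisfies $j\ge r$. Hence
\begin{equation*}
\Dcal_{s,s}^{(r)} = \bigsqcup_{j\ge r}\Ecal_s^{(j)},
\end{equation*}
and, writing $A_r := \sum_{D\in\Dcal_{s,s}^{(r)}} q^{\vmr(D)}$ and $E_r := \sum_{D\in\Ecal_s^{(r)}} q^{\vmr(D)}$, this disjoint union yields $A_r = \sum_{j\ge r} E_j$, with $\vmr$ evaluated on the same marking throughout. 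Since $A_r=0$ once $r\ge s$ (the $q$-binomial $\qbin{2s-1}{s+r}$ in Lemma~\ref{lem:s=t} vanishes for $s+r>2s-1$), the sum is finite and can be inverted by telescoping: $E_r = A_r - A_{r+1}$. Substituting the closed form of Lemma~\ref{lem:s=t} then gives
\begin{equation*}
E_r = q^{\binom{r+1}{2}}\qbin{2s-1}{s+r} - q^{\binom{r+2}{2}}\qbin{2s-1}{s+r+1}.
\end{equation*}

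It remains to match this with the right-hand side. After dividing by $q^{\binom{r+1}{2}}$ and using $\binom{r+2}{2}-\binom{r+1}{2}=r+1$, the task is the purely algebraic identity
\begin{equation*}
\qbin{2s-1}{s+r} - q^{r+1}\qbin{2s-1}{s+r+1} = \frac{1-q^{r+1}}{1-q^s}\qbin{2s}{s+r+1}.
\end{equation*}
I would prove this by factoring out $B:=\qbin{2s-1}{s+r}$: the ratio of $q$-factorials gives $\qbin{2s-1}{s+r+1} = B\,(1-q^{s-r-1})/(1-q^{s+r+1})$, so the left-hand side collapses to $B\,(1-q^{r+1})(1+q^s)/(1-q^{s+r+1})$ after combining the numerator $1-q^{s+r+1}-q^{r+1}+q^s=(1-q^{r+1})(1+q^s)$. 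On the other hand a second $q$-factorial ratio gives $\qbin{2s}{s+r+1} = B\,(1-q^{2s})/(1-q^{s+r+1})$, and since $(1+q^s)(1-q^s)=1-q^{2s}$ the two sides agree.

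The only genuinely delicate point is the set-theoretic identification in the second paragraph: one must be careful that a marked Dyck path is a single combinatorial object whose number of overlined returns $j$ places it in exactly one $\Ecal_s^{(j)}$, and in $\Dcal_{s,s}^{(r)}$ if and only if $j\ge r$, so that the weights $q^{\vmr(D)}$ are indeed partitioned rather than double-counted. Once this is granted, both the telescoping and the $q$-binomial simplification are routine.
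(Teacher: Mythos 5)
Your proof is correct and follows essentially the same route as the paper: the paper likewise writes $\Ecal_s^{(r)}=\Dcal_{s,s}^{(r)}\setminus\Dcal_{s,s}^{(r+1)}$ (your disjoint-union/telescoping formulation is the same observation) and subtracts the two instances of Lemma~\ref{lem:s=t}. Your explicit $q$-factorial verification of the resulting identity is a step the paper leaves to the reader, and it checks out.
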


\begin{proof}
Note that $\Ecal_{s}^{(r)}=\Dcal_{s,s}^{(r)} \setminus \Dcal_{s,s}^{(r+1)}$.  Hence,
\begin{align*}
\sum_{D \in \Ecal_{s}^{(r)}} q^{\vmr(D)} &=\sum_{D \in \Dcal_{s,s}^{(r)}} q^{\vmr(D)}
-\sum_{D \in \Dcal_{s,s}^{(r+1)}} q^{\vmr(D)} \\
&=q^{\binom{r+1}{2}} \qbin{2s-1}{s+r} -q^{\binom{r+2}{2}} \qbin{2s-1}{s+r+1}\\
&=q^{\binom{r+1}{2}}\frac{1-q^{r+1}}{1-q^s} \, \qbin{2s}{s+r+1},
\end{align*}
where the second equality follows from Lemma~\ref{lem:s=t}.
\end{proof}

\begin{remark}

From Lemma~\ref{lem:maD},  $\left|\Dcal_{s,t}^{(r)}\right|=\binom{s+t}{s+r}$ for all $s>t \ge r \ge 0$. Also, it is trivial from the definition that $\Dcal_{s,t}^{(r+1)} \subset \Dcal_{s,t}^{(r)}$.
If we set $s-1=t+1=n$ (resp. $s=t+1=n$), then
we immediately get
$$
\binom{2n}{n+r+2} < \binom{2n}{n+r+1}. \quad \left(\mbox{resp.}~\binom{2n-1}{n+r+1} < \binom{2n-1}{n+r}.\right)
$$
Thus we have
$$\binom{N}{j+1}<\binom{N}{j}$$
for all $\lceil N/2 \rceil < j \le N$. This is an injective proof of the unimodality of binomial coefficients, where the injection is the inclusion map.
\end{remark}

\section{Posets and partitions} \label{sec:PP}
In this section, we first recollect some preliminary definitions and notation for posets and partitions from \cite[\S 3.13 and \S 3.15]{sta1}.

For a poset $P$ of size $n$, a labeling $\omega$ is a bijection
$\omega:P\to\{1,\ldots,n\}$.  We say a labeling $\omega$ of $P$ is natural if $ \omega(x)<\omega(y)$ whenever $x<_P y$.

A linear extension $\sigma$ of $P$ is an order-preserving bijection $\sigma:P\to\{1',\ldots,n' \}$, i.e., $\sigma(x)<\sigma(y)$ whenever $x<_P y$.
Given a natural labeling $\omega$, we identify $\sigma$ 
with a permutation $w=\omega(\sigma^{-1}(1'))\cdots\omega(\sigma^{-1}(n'))$.
Let ${\Lcal}(P,\omega)$ be the set of linear extensions of $P$, regarded as permutations of the set $\{1,\ldots,n\}$.

Let $\setN$ be the set of nonnegative integers.  For a poset $P$ with a labeling $\omega$, a $(P,\omega)$-partition $\pi$ is a function $\pi:P\to\setN$ satisfying the following conditions.
\begin{itemize}
\item If $x\le_{P} y$, then $\pi(x) \ge \pi(y)$ (order-reversing);
\item if $x<_{P} y$ and $\omega(x)>\omega(y)$, then $\pi(x)>\pi(y)$.
\end{itemize}

We denote by $\Acal(P,\omega)$ the set of $(P,\omega)$-partitions.  If $\omega$ is natural, then a $(P,\omega)$-partition is just an order-reversing map from $P$ to $\setN$. 
For more details, see \cite[\S 3.13 and \S 3.15]{sta1}.

\subsection{The poset $S_\beta$}

\begin{definition} \label{def:ri}
For a composition $\beta=(b_1,\ldots,b_m)$, we define $r_i(\beta)$ by
$$r_i(\beta):=b_1+\cdots+b_i$$
for $1\le i \le m$ with $r_0(\beta)=0$. If $\beta$ is clear in the context, we will use $r_i$ for $r_i(\beta)$.
\end{definition}

For a composition $\beta=(b_1,\ldots,b_m)$, let $S_\beta$ be the poset
$$
\bigcup_{l=1}^{m}\{(i,j): i,j\in \setN,~ l\le i \le l+1,~r_{l-1}+1 \le j \le r_l \},
$$
where
 $(i_1,j_1)\le_{S_{\beta}} (i_2,j_2)$ if $i_1\le i_2$ and $j_1\le j_2$. In other words, 
\begin{equation}
S_{\beta} \cong  ({\bm 2}\times {\bm b_{\bm 1}})\oplus \cdots \oplus ({\bm 2}\times {\bm b_{\bm m}}), \label{posetsbeta}
\end{equation}
where
${\bm n}$ is the chain of $n$  elements, i.e., a linearly ordered poset of $n$ elements for a positive integer $n$, $P \times Q$ is the direct product of two posets $P$ and $Q$ with $(p_1, q_1)\le (p_2, q_2)$ in $P\times Q$ whenever $p_1\le_P p_2$ and $q_1\le_Q q_2$, and $P\oplus Q$ is the ordinal sum of $P$ and $Q$ with $x\le y$  in the disjoint union $P \cup Q$ whenever $x\le_P y$, or $x\le_Q y$, or $x\in P$ and $y\in Q$.  For more details, see \cite[\S 3.2]{sta1}.

The diagram of $S_{(2,3,1,2)}$ is given in  Figure~\ref{fig1}, where circles denote the elements of the poset and inequality sign direction indicates the order between elements.

 \begin{center}{
 \begin{figure}
\begin{tikzpicture}[scale=1.5]
\draw[thick, black] (0.0, 0.0) --(0.5, 0.0) -- (0.5, -0.5) -- (0.0, -0.5) -- (0.0, 0.0);
\draw[thick, black] (0.5, -0.5) -- (1.0, -0.5) ;
\draw[thick, black] (1.0, -0.5) -- (2.0, -0.5) -- (2.0, -1.0) -- (1.0, -1.0) -- (1.0, -0.5);
\draw[thick, black] (2.0, -1.0) -- (2.5, -1.0) ;
\draw[thick, black] (2.5, -1.0) -- (2.5, -1.5) ;
\draw[thick, black] (2.5, -1.5) -- (3.0, -1.5) ;
\draw[thick, black] (3.0, -1.5) -- (3.5, -1.5) -- (3.5, -2.0) -- (3.0, -2.0) -- (3.0, -1.5);

\foreach \x in {0,1}
\filldraw[thick, black] (\x*0.5+1.5, -0.5) --(\x*0.5+1.5, -1.0)  ;

	\foreach \x in {0,...,1}
	\filldraw (\x*0.5, 0.0) circle (.5mm);
	\foreach \x in {0,...,4}
	\filldraw (\x*0.5, -0.5) circle (.5mm);
	\foreach \x in {2,...,5}
	\filldraw (\x*0.5, -1.0) circle (.5mm);
	\foreach \x in {5,...,7}
	\filldraw (\x*0.5, -1.5) circle (.5mm);
	\foreach \x in {6,...,7}
	\filldraw (\x*0.5, -2.0) circle (.5mm);

         \foreach \x in {0,...,0}
	\filldraw (\x*0.5+0.3, -0.015) node{$\leftarrow$};
	\foreach \x in {0,...,3}
	\filldraw (\x*0.5+0.3, -0.515) node{$\leftarrow$};
	\foreach \x in {2,...,4}
	\filldraw (\x*0.5+0.32, -1.015) node{$\leftarrow$};
	\foreach \x in {5,...,6}
	\filldraw (\x*0.5+0.3, -1.51) node{$\leftarrow$};
	\foreach \x in {6,...,6}
	\filldraw (\x*0.5+0.3, -2.02) node{$\leftarrow$};
	
	\foreach \x in {0,...,1}
	\filldraw (\x*0.5+0.0, -0.3) node{$\uparrow$};
	\foreach \x in {2,...,4}
	\filldraw (\x*0.5+0.0, -0.8) node{$\uparrow$};
	\foreach \x in {5,...,5}
	\filldraw (\x*0.5+0.0, -1.3) node{$\uparrow$};
	\foreach \x in {6,...,7}
	\filldraw (\x*0.5+0.0, -1.8) node{$\uparrow$};
	
	\foreach \x in {1,...,2}
	\filldraw (\x*0.5-0.5,  0.15) node{\tiny{$(1,\x)$}};

	\foreach \x in {1,...,2}
	\filldraw (\x*0.5-0.5,  -0.65) node{\tiny{$(2, \x)$}};

	\foreach \x in {3,...,5}
	\filldraw (\x*0.5-0.5,  -0.35) node{\tiny{$(2, \x)$}};
	
	\foreach \x in {3,...,5}
	\filldraw (\x*0.5-0.5,  -1.15) node{\tiny{$(3, \x)$}};
	
	\foreach \x in {6,...,6}
	\filldraw (\x*0.5-0.5,  -0.85) node{\tiny{$(3, \x)$}};
	
	\foreach \x in {6,...,6}
	\filldraw (\x*0.5-0.5,  -1.65) node{\tiny{$(4, \x)$}};
	
	\foreach \x in {7,...,8}
	\filldraw (\x*0.5-0.5,  -1.35) node{\tiny{$(4, \x)$}};
	
	\foreach \x in {7,...,8}
	\filldraw (\x*0.5-0.5,  -2.15) node{\tiny{$(5, \x)$}};

\end{tikzpicture}
\caption{$S_{(2,3,1,2)}$ }\label{fig1}
\end{figure}
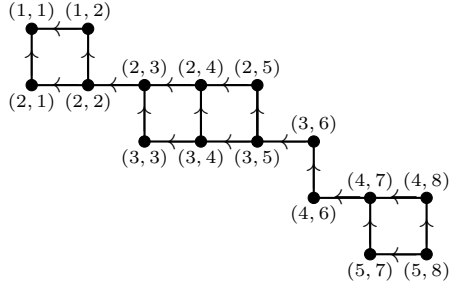}
\end{center}

\begin{lemma} \label{sumdecomp}
Let $P$ and $Q$ be posets of sizes $n_1$ and $n_2$, respectively.  Let $\omega$ be a natural labeling of $P\oplus Q$, and $\omega |_P$ and $\omega |_Q$ be  the natural labelings of $P$ and $Q$, respectively, induced by $\omega$. Then a permutation $w$ in $\Lcal(P\oplus Q, \omega)$ can be expressed uniquely as a concatenation of permutations $w^{(1)}\in \Lcal(P, \omega |_P)$ and $w^{(2)} \in \Lcal(Q, \omega |_Q)$ as follows:
\begin{equation*}
w=\hat{w}^{(1)}\hat{w}^{(2)},
\end{equation*}
where $\hat{w}^{(1)}=w^{(1)}$ and $\hat{w}_j^{(2)}=w^{(2)}_j +n_1$ for $1\le j\le n_2$.
\end{lemma}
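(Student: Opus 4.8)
The plan is to exploit the defining feature of the ordinal sum—every element of $P$ lies strictly below every element of $Q$ in $P\oplus Q$—together with the fact that both the labeling $\omega$ and any linear extension $\sigma$ are forced to respect this block structure. First I would record the consequence of naturality: since $x<_{P\oplus Q}y$ for all $x\in P$, $y\in Q$, a natural labeling satisfies $\omega(x)<\omega(y)$ for every such pair, and because $\omega$ is a bijection onto $\{1,\dots,n_1+n_2\}$ this forces $\omega(P)=\{1,\dots,n_1\}$ and $\omega(Q)=\{n_1+1,\dots,n_1+n_2\}$. In particular $\omega|_P$ is a natural labeling of $P$, and $y\mapsto \omega(y)-n_1$ is the natural labeling $\omega|_Q$ of $Q$ appearing in the statement.

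Next I would apply the identical reasoning to an arbitrary linear extension $\sigma\colon P\oplus Q\to\{1',\dots,(n_1+n_2)'\}$: being order-preserving, it sends every element of $P$ to a smaller value than every element of $Q$, so $\sigma(P)=\{1',\dots,n_1'\}$ and $\sigma(Q)=\{(n_1+1)',\dots,(n_1+n_2)'\}$. Hence $\sigma$ restricts to a linear extension $\sigma|_P$ of $P$ and, after subtracting $n_1$ from each output, to a linear extension $\sigma|_Q$ of $Q$. I would then read off $w=\omega(\sigma^{-1}(1'))\cdots\omega(\sigma^{-1}((n_1+n_2)'))$ position by position. For $1\le j\le n_1$ the preimage $\sigma^{-1}(j')$ lies in $P$, so the $j$-th letter equals $\omega|_P(\sigma^{-1}(j'))=w^{(1)}_j$, the $j$-th letter of the word in $\Lcal(P,\omega|_P)$ attached to $\sigma|_P$. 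For $1\le j\le n_2$ the preimage $\sigma^{-1}((n_1+j)')$ lies in $Q$, and $\omega(\sigma^{-1}((n_1+j)'))=\omega|_Q(\sigma^{-1}((n_1+j)'))+n_1=w^{(2)}_j+n_1$. This yields exactly $w=\hat{w}^{(1)}\hat{w}^{(2)}$ with $\hat{w}^{(1)}=w^{(1)}$ and $\hat{w}^{(2)}_j=w^{(2)}_j+n_1$.

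Uniqueness is then immediate, since the first block consists precisely of the values $\{1,\dots,n_1\}$ and the second of $\{n_1+1,\dots,n_1+n_2\}$, so both the splitting point and the two factors are determined by $w$ alone; conversely any such concatenation reverses the construction to produce a linear extension of $P\oplus Q$. I expect the only genuinely nontrivial step to be the block-structure claim for $\sigma$—that an order-preserving bijection must exhaust $P$ before listing any element of $Q$—but this is forced directly by the ordinal-sum relations and uses nothing beyond the definitions. The remainder is bookkeeping to match the permutation words, where the one point to watch is the $+n_1$ shift built into the induced labeling $\omega|_Q$.
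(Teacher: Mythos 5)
Your proof is correct and takes essentially the same approach as the paper's: both exploit that the ordinal-sum relations force $\omega$ and any linear extension $\sigma$ to map $P$ onto the initial segment $\{1,\dots,n_1\}$ (resp.\ $\{1',\dots,n_1'\}$) and $Q$ onto the complementary final segment, and then read off the word $w_j=\omega(\sigma^{-1}(j'))$ blockwise with the $+n_1$ shift. Your explicit remarks on uniqueness and on the reverse construction are minor additions the paper leaves implicit; the substance is identical.
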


\proof
Since $p<_{ P\oplus Q} q$ for any $p \in P$ and $q \in Q$, we note that if  $f: P\oplus Q \to \{1',\ldots, (n_1+n_2)'\}$ is  an order-preserving bijection, then so is each of its restriction maps $f |_P$ and $f |_Q$, where
\begin{align*}
f |_P &: P \to \{1',\ldots, n_1'\} \\
f |_Q &: Q \to \{(n_1+1)',\ldots, (n_1+n_2)' \}.
\end{align*}

Let $\sigma:P\oplus Q \to \{1',\ldots,(n_1+n_2)'\}$ be the linear extension identified with $w$. Since $\omega$ is a natural labeling, $\sigma$ is an order-preserving bijection, and the permutation $w$ is defined as
\begin{align*}
 w_j=\omega \big(\sigma^{-1}(j')\big), \quad 1\le j\le n_1+n_2,
 \end{align*}
 we see that $w_{1} \cdots w_{n_1}$ form a permutation $\hat{w}^{(1)}$ and $w_{n_1+1}\cdots w_{n_1+n_2}$ form a permutation $\hat{w}^{(2)}$, where
\begin{align*}
\hat{w}^{(1)}_j &=\omega |_P\big(\sigma |_P^{-1}(j')\big), \quad 1\le j\le n_1,\\
\hat{w}^{(2)}_j &=\omega |_Q\big(\sigma |_Q^{-1}((n_1+j)')\big), \quad 1\le j\le n_2.
\end{align*}
For $i=1,2$, define $w^{(i)}$ by
\begin{align*}
w^{(i)}_j= \hat{w}^{(i)}_j - n_1\delta_{i,2}, \quad 1\le j\le n_i,
\end{align*}
where $\delta_{i,2}$ is the Kronecker delta.

Since $\hat{w}^{(1)}$ is defined by $\omega |_P$ and $\sigma |_P$,  ${w}^{(1)}$ is in $\Lcal(P, \omega |_P)$. Also, since  $\hat{w}^{(2)}$ is defined by $\omega |_Q$ and $\sigma |_Q$,  ${w}^{(2)}$ is in $\Lcal(Q,  \omega |_Q)$.
 \endproof

\subsection{The natural labeling $\nu$ of $S_{\beta}$ and linear extensions.}
In this subsection, we define a natural labeling $\nu$ of $S_{\beta}$ and characterize linear extensions of $S_{\beta}$ with respect to $\nu$.

Given a composition $\beta=(b_1,\ldots,b_m)$ of $d$, define $\nu:S_{\beta}\to \{1,2,\ldots,2d\}$ by
$$\nu(i,j)=r_{l-1}+j+(i-l)b_{l},$$ where $l$ is an integer such that
$r_{l-1}+1 \le j \le r_l$.  We can easily see that $\nu$ is a natural labeling.

In the rest of this paper, for the poset $S_{\beta}$, we will use the labeling $\nu$ only. Thus $\nu$ will be omitted in definitions and notation.



\begin{lemma} \label{permconc}
Let $w $ be a permutation  in $\Lcal(S_{\beta})$. 
Then $w$ may be expressed as a concatenation of permutations $w^{(i)}$ in $\Lcal({\bm 2}\times{\bm b_{\bm i}})$ as follows:
\begin{equation} \label{permutationw}
w=\hat{w}^{(1)} \,\hat{w}^{(2)} \, \cdots \, \hat{w}^{(m-1)}\, \hat{w}^{(m)},
\end{equation}
where  $\hat{w}^{(i)}_j=w^{(i)}_j+ 2r_{i-1}$ for $1\le j \le 2b_i$.
\end{lemma}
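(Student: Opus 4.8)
The plan is to recognize this lemma as a direct specialization of Lemma~\ref{sumdecomp}, obtained by iterating the ordinal-sum decomposition across the $m$ summands of $S_\beta$. Recall from \eqref{posetsbeta} that $S_\beta \cong ({\bm 2}\times{\bm b_{\bm 1}})\oplus\cdots\oplus({\bm 2}\times{\bm b_{\bm m}})$, and that each summand ${\bm 2}\times{\bm b_{\bm i}}$ has exactly $2b_i$ elements, so the first $i-1$ summands together contain $2(b_1+\cdots+b_{i-1})=2r_{i-1}$ elements. The first step is to verify that the labeling $\nu$ restricts correctly: on the $i$-th summand the values $\nu(i',j)$ for the block indexed by $l=i$ run over $\{2r_{i-1}+1,\ldots,2r_{i-1}+2b_i\}$, and subtracting the shift $2r_{i-1}$ recovers precisely the natural labeling of ${\bm 2}\times{\bm b_{\bm i}}$ in isolation. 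This is the bookkeeping that makes the offsets $2r_{i-1}$ in the statement match the offsets $n_1$ appearing in Lemma~\ref{sumdecomp}.

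With that in hand, I would argue by induction on $m$. The base case $m=1$ is trivial since $S_{(b_1)}={\bm 2}\times{\bm b_{\bm 1}}$ and $r_0=0$. For the inductive step, write $S_\beta = P\oplus Q$ with $P={\bm 2}\times{\bm b_{\bm 1}}$ (of size $n_1=2b_1=2r_1$) and $Q=({\bm 2}\times{\bm b_{\bm 2}})\oplus\cdots\oplus({\bm 2}\times{\bm b_{\bm m}})\cong S_{\beta'}$ for the truncated composition $\beta'=(b_2,\ldots,b_m)$. Applying Lemma~\ref{sumdecomp} to this splitting expresses each $w\in\Lcal(S_\beta)$ uniquely as a concatenation $w=\hat{w}^{(1)}\,\hat{v}$, where $\hat{w}^{(1)}=w^{(1)}\in\Lcal({\bm 2}\times{\bm b_{\bm 1}})$ and $\hat{v}_j = v_j + 2r_1$ for a permutation $v\in\Lcal(Q)$. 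I then apply the induction hypothesis to $v\in\Lcal(S_{\beta'})$ to decompose it as $v=\hat{u}^{(2)}\cdots\hat{u}^{(m)}$. The only care needed is to track how the two successive shifts compose: the offset $2r_1$ from Lemma~\ref{sumdecomp} combines additively with the offset $2(r_{i-1}-r_1)$ coming from the inductive decomposition of $S_{\beta'}$ (whose partial sums are $r_i-r_1$), yielding the required total shift $2r_{i-1}$ on the $i$-th factor.

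The main obstacle is not any deep idea but the careful verification that these nested shifts telescope correctly to $2r_{i-1}$, and that the labeling $\nu$ of $S_\beta$ agrees, after the shift, with the intrinsic natural labeling $\nu$ of each $S_{\beta'}$ used in the induction. Concretely, one must check that $\nu$ restricted to $Q$ and then shifted down by $2r_1$ equals the labeling $\nu'$ that the construction assigns to $S_{\beta'}$; this follows directly from the formula $\nu(i,j)=r_{l-1}+j+(i-l)b_l$ by substituting the shifted partial sums, but it is the one place where a sign or index slip could occur. Once this compatibility is confirmed, the existence and uniqueness of the decomposition are inherited at each stage from Lemma~\ref{sumdecomp}, and the induction closes.
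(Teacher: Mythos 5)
Your proposal is correct and follows exactly the paper's own route: the authors also prove Lemma~\ref{permconc} by induction on $m$, invoking the ordinal-sum decomposition \eqref{posetsbeta} and Lemma~\ref{sumdecomp}, and simply omit the details. The shift bookkeeping and labeling-compatibility checks you spell out (that $\nu$ restricted to the tail summands, shifted by $2r_1$, recovers the intrinsic natural labeling of $S_{\beta'}$, so the offsets telescope to $2r_{i-1}$) are precisely the omitted details, verified correctly.
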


\proof
Use induction on $m$. Then the statement immediately follows from \eqref{posetsbeta} and Lemma~\ref{sumdecomp}. We omit the details.
 \endproof

 We now examine what permutations belong to $\Lcal(S_{\beta})$. Due to Lemma~\ref{permconc}, it is sufficient to consider the case when $\beta$ has only one part.

 \begin{lemma} \label{catalanpermutation}
For a positive integer $b$, let $w$ be a permutation in $\Lcal({\bm 2}\times {\bm b})$. Then for each $k$,
\begin{equation} \label{catalanword}
w_k(1)-w_k(-1)\ge 0,
\end{equation}
where $w_k(1)$ and $w_k(-1)$ denote the numbers of $w_i \le b$ and $w_i> b$ for $1\le i \le k$, respectively.  
\end{lemma}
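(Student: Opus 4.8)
The plan is to reinterpret the inequality \eqref{catalanword} in terms of order ideals of the grid poset ${\bm 2}\times{\bm b}$. First I would recall that $w$ is obtained from a linear extension $\sigma$ of ${\bm 2}\times{\bm b}$ by $w_j=\nu\big(\sigma^{-1}(j')\big)$, so that the prefix $w_1,\dots,w_k$ lists exactly the elements of $I_k:=\{\sigma^{-1}(1'),\dots,\sigma^{-1}(k')\}$. The crucial structural fact is that $I_k$ is always an order ideal (down-set): if $y\in I_k$ and $x<_{{\bm 2}\times{\bm b}}y$, then order-preservation of $\sigma$ gives $\sigma(x)<\sigma(y)\le k'$, whence $x\in I_k$ as well.

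Next I would translate the two statistics into the geometry of $I_k$. For the single-part composition $\beta=(b)$ the labeling reduces to $\nu(1,j)=j$ and $\nu(2,j)=b+j$, so the labels $\le b$ are precisely the elements of the top row $i=1$ and the labels $>b$ are precisely the elements of the bottom row $i=2$. Consequently $w_k(1)=\#\{(1,j)\in I_k\}$ and $w_k(-1)=\#\{(2,j)\in I_k\}$, and \eqref{catalanword} becomes the assertion that every order ideal of ${\bm 2}\times{\bm b}$ meets the top row in at least as many elements as the bottom row.

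Finally I would prove this last assertion by an explicit injection. Since $(1,j)\le_{{\bm 2}\times{\bm b}}(2,j)$ for every $j$, the ideal property guarantees that $(2,j)\in I_k$ implies $(1,j)\in I_k$; hence the map $(2,j)\mapsto(1,j)$ embeds the row-$2$ elements of $I_k$ injectively into its row-$1$ elements, giving $w_k(-1)\le w_k(1)$, which is \eqref{catalanword}. I expect no genuine obstacle here, since the argument is computation-free; the only points requiring care are verifying that prefixes of linear extensions are order ideals (rather than arbitrary subsets) and the bookkeeping of $\nu$ confirming that ``label $\le b$'' coincides with ``row $i=1$''.
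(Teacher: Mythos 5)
Your proof is correct, and it takes a genuinely different (more structural) route than the paper's. The paper argues pointwise: it fixes $k$, sets $(x,y)=\sigma^{-1}(k')$, and counts positions explicitly --- when $x=1$, order-preservation forces exactly the labels $1,\dots,y$ of the top row to occur among $w_1,\dots,w_k$ (so $w_k(1)=y$), while the bottom-row elements $(2,z)$ with $z\ge y$ must appear at or after position $k$, giving $w_k(-1)\le y-1$; the case $x=2$ is symmetric. You instead isolate the standard fact that the prefix $I_k=\{\sigma^{-1}(1'),\dots,\sigma^{-1}(k')\}$ is an order ideal and then apply the injection $(2,j)\mapsto(1,j)$, legitimate because $(1,j)\le_{{\bm 2}\times{\bm b}}(2,j)$. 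Both arguments ultimately rest on the same two ingredients (order-preservation of $\sigma$ and the column relations of the grid), and your bookkeeping for $\nu$ is right: for $\beta=(b)$ one has $\nu(i,j)=j+(i-1)b$, so ``label $\le b$'' is exactly ``row $1$.'' What your version buys is the elimination of the two-case analysis and index-chasing, uniformity in $k$, and immediate generalization (any ideal of ${\bm a}\times{\bm b}$ meets row $i$ at least as often as row $i+1$), which makes the ballot-path interpretation exploited later in Proposition~\ref{prop:maj} transparent. What the paper's computation buys is slightly sharper local information: when $\sigma^{-1}(k')$ lies in the top row it yields $w_k(1)-w_k(-1)\ge 1$, and its byproduct --- that the positions of the labels $1,\dots,b$, and likewise of $b+1,\dots,2b$, occur in increasing order --- is reused in the proof of Lemma~\ref{descent}; neither refinement is needed for \eqref{catalanword} itself, though if you adopted your proof in the paper you would want to record that monotonicity observation separately for Lemma~\ref{descent}.
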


\proof
Let $\sigma:{\bm 2}\times {\bm b}\to \{1',\ldots, (2b)' \}$ be the linear extension identified with $w$.  Given $k$, set $(x,y)=\sigma^{-1}(k')$.

Suppose that $x=1$.
Let $\sigma(1,z)=i'_z$ for $1\le z\le b$.
Since $\sigma$ is an order-preserving bijection,
$$
i'_z \le k'  \text{ if and only if } z\le y.
$$
In addition, for all $z\le y$,
$$
w_{i_z}=\nu(\sigma^{-1}(i_{z}'))=\nu(1,z)=z \le b,
$$
where the last equality follows from the definition of $\nu$.  Thus $w_{k}(1)=y$.

Also, let $\sigma(2, z )=j'_z$ for $ 1 \le z\le b$. Since $\sigma$ is an order-preserving bijection,
$$
j'_z \ge  k'  \text{ if } z\ge y.
$$
And, for all $z\ge y$,
$$
w_{j_z}=\nu(\sigma^{-1}(j_{z}'))=\nu(2, z)=b+z > b,
$$
where the last equality follows from the definition of $\nu$.  Thus the number of $w_j>b$ for $j>k$ is at least $b-y+1$, so consequently,  $w_{k}(-1)\le y -1$.  Therefore,  \eqref{catalanword} is verified.

For $x=2$, it can be proved in a similar way. So we omit the details.
\endproof

Lemma~\ref{catalanpermutation} shows that permutations in $\Lcal({\bf 2}\times {\bm b})$ are Catalan permutations. In the following lemma, we will identify decent positions of such permutations.

\begin{lemma} \label{descent}
For a positive integer $b$, let $w$ be a permutation in $\Lcal({\bm 2}\times {\bm b})$. If $k$ is a decent position in $w$, then $w_{k}>b\ge w_{k+1}$.
\end{lemma}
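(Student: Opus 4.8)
The plan is to work directly with the linear extension $\sigma$ identified with $w$ and to exploit the two-block structure of the labeling $\nu$ on the two rows of $\mathbf{2}\times\mathbf{b}$. Recall that $w_k=\nu(\sigma^{-1}(k'))$, and that by the definition of $\nu$ specialized to the single part $\beta=(b)$ one has $\nu(1,j)=j$ for the top row and $\nu(2,j)=b+j$ for the bottom row. Thus the labels in $\{1,\dots,b\}$ are exactly those coming from the elements $(1,j)$, and the labels in $\{b+1,\dots,2b\}$ are exactly those coming from the elements $(2,j)$.

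First I would record the following structural fact: the values $1,\dots,b$ occur in $w$ at increasing positions, and likewise the values $b+1,\dots,2b$. Indeed, the value $j\le b$ appears in $w$ at the position $p$ with $p'=\sigma(1,j)$; since $(1,1)<(1,2)<\cdots<(1,b)$ is a chain and $\sigma$ is order-preserving, we have $\sigma(1,1)<\cdots<\sigma(1,b)$, so these positions increase with $j$. The same argument applied to the chain $(2,1)<\cdots<(2,b)$ handles the values $b+1,\dots,2b$. Consequently $w$ is a shuffle (interleaving) of the two increasing sequences $1<2<\cdots<b$ and $b+1<b+2<\cdots<2b$.

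From this the lemma is immediate. Suppose $k$ is a descent, i.e.\ $w_k>w_{k+1}$. If $w_k$ and $w_{k+1}$ both belonged to $\{1,\dots,b\}$, or both to $\{b+1,\dots,2b\}$, then as consecutive occurrences of a single increasing run they would satisfy $w_k<w_{k+1}$, contradicting the descent. Hence they lie in different blocks; and if $w_k\le b<w_{k+1}$ we would again have $w_k<w_{k+1}$. Therefore necessarily $w_k>b\ge w_{k+1}$, which is the desired conclusion.

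There is no genuine obstacle here: the whole content is the observation that each of the two rows of $\mathbf{2}\times\mathbf{b}$ feeds its labels into $w$ in increasing positional order, so that a descent can arise only at a junction where a bottom-row label (a value $>b$) is immediately followed by a top-row label (a value $\le b$). The single point that deserves an explicit line rather than being left implicit is the inequality $\sigma(1,1)<\cdots<\sigma(1,b)$ (and its bottom-row analogue), which uses order-preservation of $\sigma$ together with the fact that each row is a chain; I would state this carefully and then let the shuffle description finish the argument.
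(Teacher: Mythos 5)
Your proof is correct and takes essentially the same route as the paper: both rest on the observation that, because $\sigma$ is order-preserving on the two row chains of ${\bm 2}\times{\bm b}$, the values $1,\ldots,b$ and $b+1,\ldots,2b$ each occur at increasing positions in $w$, so $w$ is a shuffle of two increasing runs and a descent forces $w_k>b\ge w_{k+1}$. You merely make explicit the case analysis that the paper's two-line argument leaves implicit.
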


\proof
Since both $\sigma$ and $\nu$ are both order-preserving,  as we seen in the proof of Lemma~\ref{catalanpermutation},
if $w_{i_z}=z$, then $i_1<\cdots <i_b$,  and if  $w_{j_z}=b+z$, then $j_1<\cdots <j_b$.
%
Thus if $k$ is a decent position, then $w_k>b \ge w_{k+1}$.
\endproof

\begin{example} \label{example3.2}
Given the poset $S_{(2,3,1,2)}$ labeled by $\nu$:
$$\nu=
\begin{array}{cccccccc}
1&2& & &  &  &  &  \\
3&4&5&6& 7&  &  &  \\
 & &8&9&10&11&  &  \\
 & & & &  &12&13&14 \\
 & & & &  &  &15&16
\end{array},
$$
let us consider the following linear extension $\sigma$:
$$
\sigma=
\begin{array}{cccccccc}
1'&3'& & &  &  &  &  \\
2'&4'&5'&7'& 8'&  &  &  \\
 & &6'&9'&10'&11'&  &  \\
 & & & &  &12'&13'&14' \\
 & & & &  &  &15'&16'
\end{array}.
$$
Then the permutation $w\in \Lcal(S_{(2,3,1,2)})$ associated with $\sigma$ is as follows:
\begin{align*}
w&= \nu(\sigma^{-1}(1'))\,\nu(\sigma^{-1}(2'))\,\cdots \,\nu(\sigma^{-1}(16'))\\
&=1\,3\,2\, 4\, 5\, 8\, 6\, 7\, 9\, 10\,11\, 12\, 13\, 14\, 15\, 16. 
\end{align*}

Meanwhile, a function from $S_{(2,3,1,2)}$ to $\setN$ defined by
$$
\begin{array}{cccccccc}
8&6& & & & & &  \\
7&6&6&6&5& & &  \\
 & &6&6&4&3& &  \\
 & & & & &1&1&0 \\
 & & & & & &0&0
\end{array}
$$
is an $S_{(2,3,1,2)}$-partition of $65$.
\end{example}

We now recall a result from \cite[Theorem 3.15.7]{sta1} and \cite[p. 381]{sta2}.

\begin{proposition} \label{prop:PP}
For a composition $\beta$ of $d$,
\begin{equation*}
\sum_{\pi \in \Acal(S_{\beta})} q^{|\pi|}
= \frac{1}{(q;q)_{2d}} \sum_{w\in \Lcal(S_{\beta})} q^{\maj(w)},
\end{equation*}
where $|\pi |$ denotes the sum of parts of $\pi$.
\end{proposition}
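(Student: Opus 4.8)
The plan is to invoke the fundamental theorem of $(P,\omega)$-partitions, since the proposition is exactly its specialization to $P = S_\beta$ with the labeling $\nu$. First I would record the two hypotheses that make the theorem applicable. By \eqref{posetsbeta} the poset $S_\beta$ has $\sum_{i=1}^{m} 2b_i = 2d$ elements because $\beta$ is a composition of $d$, and $\nu$ was shown to be a natural labeling in the preceding subsection. Consequently an element of $\Acal(S_\beta)$ is simply an order-reversing map $\pi\colon S_\beta \to \setN$, and the symbol $\maj(w)$ on the right-hand side is the ordinary major index $\maj(w) = \sum_{k\in \mathrm{Des}(w)} k$ of the permutation $w\in\Lcal(S_\beta)$, where $\mathrm{Des}(w) = \{k : w_k > w_{k+1}\}$.

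The heart of the argument is the disjoint decomposition
\[
\Acal(S_\beta) = \bigsqcup_{w \in \Lcal(S_\beta)} \Acal_w ,
\]
where $\Acal_w$ consists of those order-reversing $\pi$ that are \emph{compatible} with the linear extension $w=w_1\cdots w_{2d}$: writing $a_k = \pi(\sigma^{-1}(k'))$ for the value of $\pi$ read along the chain prescribed by the linear extension $\sigma$ identified with $w$, compatibility means $a_1 \ge a_2 \ge \cdots \ge a_{2d} \ge 0$ with the inequality $a_k \ge a_{k+1}$ strict exactly when $k \in \mathrm{Des}(w)$. The existence and uniqueness of this decomposition is the content of the cited theorem: every order-reversing $\pi$ refines uniquely to a total order compatible with $S_\beta$, ties among incomparable elements being broken by the natural labeling $\nu$. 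I would cite this from \cite[Theorem 3.15.7]{sta1} and \cite[p.~381]{sta2}; alternatively one proves it by inducting on $|S_\beta|$, inserting each element into the chain according to its $\pi$-value, with $\nu$ resolving equalities.

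Granting the decomposition, the per-extension sum is a routine geometric computation. For a fixed $w$, the substitution $a_k = b_k + \#\{\, j \in \mathrm{Des}(w) : j \ge k \,\}$ converts the constrained sequences bijectively into arbitrary weakly decreasing sequences $b_1 \ge \cdots \ge b_{2d} \ge 0$, while shifting the total weight by $\sum_{k\in\mathrm{Des}(w)} k = \maj(w)$. Since the generating function for weakly decreasing nonnegative integer sequences of length $2d$ is $1/(q;q)_{2d}$, we obtain
\[
\sum_{\pi \in \Acal_w} q^{|\pi|} = \frac{q^{\maj(w)}}{(q;q)_{2d}} .
\]
Summing over all $w \in \Lcal(S_\beta)$ and factoring out $1/(q;q)_{2d}$ gives the claimed identity.

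The only genuine obstacle is the rigorous justification of the disjoint decomposition, namely the tie-breaking rule that assigns to each $\pi$ its unique compatible linear extension; the remaining steps are bookkeeping. Because this is precisely Stanley's fundamental theorem and $\nu$ is natural, I would rely on the citation rather than reprove it, so the argument stays short.
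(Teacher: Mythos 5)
Your proposal is correct and takes the same route as the paper, which offers no proof of its own but simply recalls this as Stanley's fundamental theorem of $(P,\omega)$-partitions from \cite[Theorem 3.15.7]{sta1} and \cite[p.~381]{sta2}, applied to $S_\beta$ with the natural labeling $\nu$ and $|S_\beta|=2d$. Your verification of the hypotheses and your sketch of the standard argument (the disjoint decomposition by compatible linear extensions with strictness at descents, followed by the shift $a_k = b_k + \#\{j\in\mathrm{Des}(w): j\ge k\}$ yielding the factor $q^{\maj(w)}/(q;q)_{2d}$) are accurate and consistent with the cited theorem.
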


\subsection{Linear extensions and Dyck paths}

We now relate linear extensions of the poset $S_{\beta}$ to Dyck paths.

Let $\Ecal_{s}(x_1,\ldots,x_{r})$ be the set of Dyck paths from $(0,0)$ to $(2s,0)$ with $r$ marked returns at $(2x_1,0),\ldots,(2x_{r},0)$.
For $r=0$, $\Ecal_{s}(x_1,\ldots,x_{r})$ is the set of Dyck paths from $(0,0)$ to $(2s,0)$ with no marked returns, so  we define it to be $E^{(0)}_s$.  

\begin{proposition} \label{prop:maj}
For a composition $\beta=(b_1,\ldots,b_m)$ of $d$,
$$
\sum_{w\in \Lcal(S_{\beta})} q^{\maj(w)}
= \sum_{D \in \Ecal_{d}(r_1,\ldots,r_{m-1})} q^{\maj(D)-2(r_1+\cdots+r_{m-1})}.
$$
Here, $r_i=r_i(\beta)$ is defined in Definition~\ref{def:ri}. If $m=1$, then $r_1+\cdots +r_{m-1}$ will be the empty sum and defined to be $0$.
\end{proposition}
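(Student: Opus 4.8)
The plan is to establish a weight-preserving bijection between linear extensions of $S_\beta$ and Dyck paths in $\Ecal_d(r_1,\ldots,r_{m-1})$, under which $\maj(w)$ on the permutation side corresponds to $\maj(D)-2(r_1+\cdots+r_{m-1})$ on the path side. The natural encoding is to read off each permutation $w\in\Lcal(S_\beta)$ as a word of up- and down-steps: since $\nu$ labels the top rows of each block $\bm 2\times\bm b_i$ with the small values and the bottom rows with the large values, I would record an $\rm u$-step whenever $w_k\le$ (the appropriate block threshold) and a $\rm d$-step otherwise. More precisely, using Lemma~\ref{permconc} to split $w=\hat w^{(1)}\cdots\hat w^{(m)}$ into pieces coming from the individual posets $\bm 2\times\bm b_i$, I would map each letter $w_k$ lying in block $i$ to $\rm u$ if it is one of the ``top-row'' values $2r_{i-1}<w_k\le 2r_{i-1}+b_i$ and to $\rm d$ if it is one of the ``bottom-row'' values $2r_{i-1}+b_i<w_k\le 2r_i$.

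First I would verify that this word is indeed a Dyck path. Lemma~\ref{catalanpermutation} already guarantees that within each block the partial counts satisfy $w_k(1)-w_k(-1)\ge 0$, i.e. the path never dips below the $x$-axis \emph{within} a block; and since each block $\bm 2\times\bm b_i$ contributes exactly $b_i$ up-steps and $b_i$ down-steps, the path returns to the $x$-axis precisely at the block boundaries, at the points $(2r_1,0),\ldots,(2r_{m-1},0)$, finishing at $(2d,0)$. This is exactly why the marked returns sit at $2r_1,\ldots,2r_{m-1}$: I would declare those $m-1$ boundary returns to be the marked ones, which lands $D$ in $\Ecal_d(r_1,\ldots,r_{m-1})$, and conversely any path with marked returns exactly at those positions decomposes uniquely into $m$ Dyck-path blocks of the prescribed lengths, recovering $w$ via the inverse of the above reading rule together with Lemma~\ref{permconc}.

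Next I would match the statistics. By Lemma~\ref{descent}, a descent of $w$ occurs at position $k$ exactly when $w_k>b\ge w_{k+1}$ within a block, i.e. exactly at a $\rm d\rm u$ pattern, which is precisely a valley of $D$. Hence the descent set of $w$ corresponds to the valley positions of $D$, and $\maj(w)=\sum_{k\in\mathrm{Des}(w)}k$ should be compared with $\maj(D)=\sum_{\text{valleys }v_i}x_i$. The only discrepancy is a systematic shift: the valley of $D$ sitting at the $\rm d\rm u$ juncture of $w$ at position $k$ has $x$-coordinate $k$, \emph{except} at the $m-1$ block boundaries. A boundary return at $(2r_i,0)$ is a valley of $D$ (a $\rm d$ ending block $i$ followed by a $\rm u$ starting block $i+1$) and so contributes $2r_i$ to $\maj(D)$, whereas the concatenation in Lemma~\ref{permconc} means the junction between $\hat w^{(i)}$ and $\hat w^{(i+1)}$ need \emph{not} be a descent of $w$ (indeed $\hat w^{(i)}$ ends with a bottom-row value of block $i$ and $\hat w^{(i+1)}$ begins with a value that is larger after the $+2r_i$ shift, so it is an ascent). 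Thus each marked return at $2r_i$ contributes $2r_i$ to $\maj(D)$ but $0$ to $\maj(w)$, producing exactly the correction $\maj(w)=\maj(D)-2(r_1+\cdots+r_{m-1})$.

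The main obstacle I anticipate is pinning down this boundary bookkeeping rigorously: I must confirm that the block-junction positions in $w$ are genuinely ascents (so that they contribute nothing to $\maj(w)$) while the corresponding points of $D$ are genuinely valleys at the expected $x$-coordinates $2r_i$, and that no other valley/descent correspondence is disturbed by the $+2r_{i-1}$ relabeling in Lemma~\ref{permconc}. Concretely, I would check that the last letter of $\hat w^{(i)}$ is a bottom-row value (hence corresponds to a $\rm d$ ending at height $0$) and the first letter of $\hat w^{(i+1)}$ is a top-row value shifted up by $2r_i$ (hence strictly larger, giving an ascent and a $\rm u$ leaving the axis). Once this is verified block-by-block, the bijection and the statistic identity follow, and summing $q^{\maj(w)}$ over $\Lcal(S_\beta)$ against $q^{\maj(D)-2(r_1+\cdots+r_{m-1})}$ over $\Ecal_d(r_1,\ldots,r_{m-1})$ gives the claimed identity.
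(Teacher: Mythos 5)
Your proposal is correct and takes essentially the same route as the paper's own proof: the same block decomposition via Lemma~\ref{permconc}, the same up/down encoding of each $w^{(i)}$ justified by Lemmas~\ref{catalanpermutation} and~\ref{descent}, and the same bookkeeping at the $m-1$ block junctions, where the point $(2r_i,0)$ is a valley of $D$ contributing $2r_i$ to $\maj(D)$ while the corresponding position in $w$ is an ascent, producing exactly the correction $-2(r_1+\cdots+r_{m-1})$. (One minor wording slip: the path touches the axis \emph{at} the block boundaries but may also have unmarked internal returns within a block, so ``precisely at the block boundaries'' overstates it—though your marking convention shows you treat this correctly.)
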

\begin{proof}

For a permutation $w\in\Lcal(S_{\beta})$, we first rewrite it as in \eqref{permutationw}.

For each permutation $w^{(i)}$ in $\Lcal({\bf 2}\times {\bm b_{\bm i}})$, we assign an up-step or down-step to each $w^{(i)}_j$, $1\le j\le 2b_i$,  as follows:
$$
w^{(i)}_j \longrightarrow
\begin{cases}
{\rm u}, \quad&  \text{ if $w^{(i)}_j \le b_i$},\\
{\rm d}, \quad& \text{ if $w^{(i)}_j >b_i$},
\end{cases}
$$
which we see from Lemma~\ref{catalanpermutation}   yields a Dyck path $D^{(i)}$ in $\Ecal_{b_i}^{(0)}$. Also, it follows from Lemma~\ref{descent} that the set of descent positions of $w^{(i)}$ is equal to the set of valley positions of $D^{(i)}$, namely
\begin{equation*}
\maj(w^{(i)})=\maj(D^{(i)}).
\end{equation*}
Thus, we have
$$
\sum_{w^{(i)} \in \Lcal({\bm 2}\times{\bm b_{\bm i} })} q^{\maj(w^{(i)})}
=\sum_{D^{(i)} \in \Ecal_{b_i}^{(0)}} q^{\maj(D^{(i)})}.
$$

By concatenating the paths $D^{(1)}, D^{(2)}, \ldots, D^{(m)}$, we obtain a path
$D \in \Ecal_d(r_1,\ldots,r_{m-1})$ such that
$$D=D^{(1)} \,|\, D^{(2)}\,|\, \cdots \,|\,D^{(m)},$$
where $|$ indicates  where marked returns occur.

We now check $\maj(w)$ and $\maj(D)$.  Since the concatenations of the permutations $w^{(i)}$ and the paths $D^{(i)}$ are merely shifting the positions of permutations and paths by the same units $2b_i$,  each descent position and its corresponding valley position still match after the concatenations. However, $D$ has valleys between $D^{(i)}$ and $D^{(i+1)}$ for $1\le i \le m-1$; while no descent occurs between $w^{(i)}$ and $w^{(i+1)}$. Therefore,
\begin{equation*}
\maj(w)=  \maj(D)- 2(r_1+\cdots r_{m-1}),
\end{equation*}
which completes the proof.
\end{proof}

\begin{example}
Let us recall the poset $S_{(2,3,1,2)}$ with the labeling $\nu$ and the linear extension $\sigma$ identified with the permutation $w=1\,3\,2\, 4\, 5\, 8\, 6\, 7\, 9\, 10\,11\, 12\, 13\, 14\, 15\, 1$   in Example~\ref{example3.2}.

Then
$$w^{(1)}=1324, \; w^{(2)}=142356, \; w^{(3)}=12,  \; w^{(4)}=1234,
$$
which yields
$$
D^{(1)}=\rm{udud},\;   D^{(2)}=\rm uduudd,\;  D^{(3)}=\rm ud, \;  D^{(4)}=\rm uudd,
$$
Thus the corresponding Dyck path $D\in \Ecal_{8}(2,3,1,2)$ is
$$
D=
D^{(1)} \,|\,D^{(2)} \,|\,D^{(3)} \,|\,D^{(4)}
=
{\rm u}\,{\rm d}\,{\rm u}\, {\rm d}\,|\, {\rm u}\, {\rm d}\, {\rm u}\, {\rm u}\, {\rm d}\, {\rm d}\,|\,{\rm u}\, {\rm d}\,|\, {\rm u}\, {\rm u}\, {\rm d}\, {\rm d}.
$$
Note that  $\maj(w)=2+6=8$ (between $3$ and $2$ and between $8$ and $6$ in $w$) and
$$\maj(D)-2(r_1+r_2+r_3)=(2+4+6+10+12)-2(2+5+6)=8=\maj(w).$$
\end{example}

\section{Proof of Theorem~\ref{thm:main}}\label{sec4}
Let $\lambda$ be  a Frobenius symbol with $d$ columns. Since each row of $\lambda$ is strictly decreasing, by subtracting $d-1$, $d-2,\ldots, 1$ and $0$ from the largest entry in each row, we obtain a two rowed array $\mu$ with weakly decreasing entries  in each row.  Since we subtracted the same quantity from the entries in each column, the parity of the column will remain the same.
We will call $\mu$ a Frobenius array.
We denote by $|\mu|$  the sum of the entries in $\mu$. Since $\lambda$ has $d$ columns, we see that
$|\mu|=|\lambda|-d^2$.


For  a composition $\beta=(b_1,b_2,\ldots,b_m)$ of $d$,
let $\Fcal(\beta)$ be the set of Frobenius arrays $\mu$ with $d$ columns and $m$ parity blocks $B_1,B_2,\ldots,B_m$, where $B_m$ is positive and each block $B_i$ has $b_i$ columns.

%

We now rearrange the entries of $\mu$ in $\Fcal(\beta)$ to form an $S_{\beta}$-partition $\gamma$ as follows.
\begin{itemize}
\item First, we interchange the top and bottom entries in each negative block of $\mu$.  
We call the resulting array $\hat{\mu}$.

\item  We then assign the $(i,j)$-th entry $\hat{\mu}_{i,j}$ of $\hat{\mu}$ to the $(i+ l, j)$-th entry of  $S_{\beta}$, where $r_{l}<j\le r_{l+1}$.
\end{itemize}
Clearly, $|\gamma|$, the sum of entries in $\gamma$, equals  $|\mu|$.

\begin{example}\label{ex4.1}
Let us consider
$$
\mu=\left( \begin{array}{cc|ccc|c|cc}
9&8&8&8&7&2&2&1\\
10&8&7&7&5&4&0&0
\end{array} \right).
$$
By interchanging the first and  second rows in each of the negative blocks,  we obtain
$$\hat{\mu}=
\left( \begin{array}{cc|ccc|c|cc}
10&8&8&8&7&4&2&1\\
9&8&7&7&5&2&0&0
\end{array} \right).
$$
By assigning the entries of $\hat{\mu}$ to the poset $S_{(2,3,1,2)}$, we obtain
$$
\gamma=\begin{array}{cccccccc}
10&8& & & & & &  \\
9 &8&8&8&7& & &  \\
 & &7&7&5&4& &  \\
 & & & & &2&2&1 \\
 & & & & & &0&0
\end{array}.
$$
\end{example}

We now prove that $\gamma$ is an $S_{\beta}$-partition.
\begin{lemma} \label{lem4.2}
For a Frobenius array $\mu \in \Fcal(\beta)$,  $\gamma$ is an $S_{\beta}$-partition, i.e.,  the entries of $\gamma$ in each row are decreasing and the entries in each column are decreasing.  In addition, the entries in each column that were in the positive blocks in $\mu$ are strictly decreasing.
\end{lemma}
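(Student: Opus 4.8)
The plan is to verify directly that $\gamma$ is order-reversing on $S_\beta$; since $\nu$ is a natural labeling, this is the only requirement for $\gamma$ to be an $S_\beta$-partition. By transitivity it suffices to check the defining inequalities on the cover relations of $S_\beta$, that is, that $\gamma$ weakly decreases under a single step to the right within a poset row and under a single step down within a poset column. First I would record how the cells of $\hat\mu$ sit inside $S_\beta$: a column $j$ lying in the block $B_k$ (so that $r_{k-1}<j\le r_k$) occupies exactly the two poset cells $(k,j)$ and $(k+1,j)$, carrying the values $\hat\mu_{1,j}$ and $\hat\mu_{2,j}$. Consequently, for $2\le k\le m$, poset row $k$ is the concatenation of the bottom entries of $B_{k-1}$ followed by the top entries of $B_k$, while poset row $1$ is the top row of $B_1$ and poset row $m+1$ is the bottom row of $B_m$; moreover each poset column meets only a single block.

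For the column inequalities, each poset column contains only the two cells above, so the condition is exactly $\hat\mu_{1,j}\ge\hat\mu_{2,j}$ for every $j$. Here the parity of the column does the work: the difference $\mu_{1,j}-\mu_{2,j}$ is the rank of the column and is unchanged by the subtraction of the staircase. If $j$ lies in a positive block, then $\mu_{1,j}-\mu_{2,j}\ge 1$ and no interchange was performed, so $\hat\mu_{1,j}-\hat\mu_{2,j}\ge 1$, which yields the strict decrease claimed in the final sentence of the lemma; if $j$ lies in a negative block, then $\mu_{1,j}\le\mu_{2,j}$ and the interchange gives $\hat\mu_{1,j}=\mu_{2,j}\ge\mu_{1,j}=\hat\mu_{2,j}$, a weak decrease. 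This settles the column monotonicity and the strictness statement simultaneously.

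For the row inequalities I would separate the two kinds of right-adjacency occurring in a poset row. The within-block adjacencies are immediate: inside a single block the two rows of $\hat\mu$ are the two rows of $\mu$, either unchanged (positive block) or interchanged (negative block), and since both rows of $\mu$ are weakly decreasing, so are both rows of $\hat\mu$ on that block. The remaining adjacency is the junction in poset row $k$ between the last bottom entry $\hat\mu_{2,r_{k-1}}$ of $B_{k-1}$ and the first top entry $\hat\mu_{1,r_{k-1}+1}$ of $B_k$; this boundary comparison is the main obstacle, since it pairs entries from two different blocks of which at most one has been interchanged. The key observation is that consecutive parity blocks have opposite parity, so exactly one of $B_{k-1}$, $B_k$ is interchanged. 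If $B_{k-1}$ is negative and $B_k$ is positive, the junction inequality becomes $\mu_{1,r_{k-1}}\ge\mu_{1,r_{k-1}+1}$, the weak decrease of the top row of $\mu$ across the block boundary; if $B_{k-1}$ is positive and $B_k$ is negative, it becomes $\mu_{2,r_{k-1}}\ge\mu_{2,r_{k-1}+1}$, the weak decrease of the bottom row of $\mu$. Either way it follows from the monotonicity of the rows of $\mu$, so the interchange has been arranged precisely so that the junction reduces to a single intact row of $\mu$. Assembling the column and row inequalities shows $\gamma$ is order-reversing, hence an $S_\beta$-partition, and the strictness on positive columns has already been obtained.
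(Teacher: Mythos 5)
Your proposal is correct and follows essentially the same route as the paper's proof: the column inequalities (with strictness on positive columns) come from the parity of each column together with the interchange in negative blocks, and the row inequalities reduce, at each block junction, to the weak decrease of a single intact row of $\mu$ --- exactly the paper's $y_1\ge y_2$ argument, with your two junction cases matching the paper's positive--negative and negative--positive cases. Your treatment is somewhat more systematic (checking cover relations and stating the within-block adjacencies explicitly), but no new idea is involved.
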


\proof
It is clear from the construction of $\gamma$ that its underlying poset is $S_{\beta}$.

Note that for each column of $\mu$  the top entry is less than or equal to the bottom entry if they are in a negative block. So, by interchanging them, the entries are column-wise decreasing.  Also, top entry is greater than the bottom if they are in a positive block. Thus we get the assertion about the columns of $\gamma$.

We now prove the assertion about the rows of $\gamma$. Since the entries in the first row are from only the first block of $\mu$,
they are clearly decreasing.  So, we will check the other rows. Note that the entries in row $i$ are from the $(i-1)$-st and $i$-th blocks  of $\mu$.   Let $x_1$ and $y_1$ be the entries in the last column of the $(i-1)$-st block, and $x_2$ and $y_2$ be the entries in the first column of the $i$-th block of $\mu$, namely
$$
\mu=\left( \begin{array}{cc|cc}
\cdots & x_{1} & x_2 &\cdots \\
\cdots & y_1 & y_2 &\cdots
\end{array} \right).
$$
First, assume that the $(i-1)$-st and $i$th blocks are positive and negative, respectively.  Then, $\gamma$ will be as follows:
$$
\gamma = \; \begin{array}{cccc}
\cdots & x_{1} & & \\
\cdots & y_1 & y_2 &\cdots \\
& &  x_2 &\cdots
\end{array} \;
$$
Since $\mu$ is a Frobenius array,  it is clear that  $y_1\ge y_2$.  Thus the $i$-th row of $\gamma$ to which $y_1$ and $ y_2$  belong is decreasing.  We can similarly prove the case when the $(i-1)$-st and $i$-th blocks are negative and positive, so  we omit the details.
\endproof

We now construct an $S_{\beta}$-partition $\pi$ from $\gamma$ as follows:
\begin{itemize}
\item Subtract $\lfloor\frac{m+2-i}{2}\rfloor$ from each entry in the $i$-th row of $\gamma$.
We call the resulting array $\pi$.
\end{itemize}

Set $b_0=b_{m+1}=0$.  Since there are $m+1$ rows in $\gamma$ and there are $b_{i-1}+b_i$ entries in each row $i$, the quantity we subtracted from $\gamma$ is
\begin{align*}
\sum_{i=1}^{m+1} \left \lfloor \frac{m+2-i}{2} \right \rfloor (b_{i-1}+b_{i}) &= \sum_{i=2}^{m+1} \left \lfloor \frac{m+2-i}{2} \right \rfloor b_{i-1}+  \sum_{i=1}^{m} \left \lfloor \frac{m+2-i}{2} \right \rfloor  b_{i}\\
&= \sum_{i=1}^{m} \left( \left \lfloor \frac{m+1-i}{2} \right \rfloor  + \left \lfloor \frac{m+2-i}{2} \right \rfloor\right)  b_{i}\\
&= \sum_{i=1}^{m} (m+1-i)b_i\\
&=r_1+r_2+\cdots+r_{m-1}+r_m.
\end{align*}
Thus,
\begin{equation} \label{eq:gam-pi}
|\gamma |-|\pi |=r_1+r_2+\cdots+r_{m-1}+r_m .
\end{equation}


\begin{example}
Let us consider $\gamma$ from Example~\ref{ex4.1}:
$$
\gamma=\begin{array}{cccccccc}
10&8& & & & & &  \\
9 &8&8&8&7& & &  \\
 & &7&7&5&4& &  \\
 & & & & &2&2&1 \\
 & & & & & &0&0
\end{array}.
$$
Then
$$
\pi=\begin{array}{cccccccc}
8&6& & & & & &  \\
7 &6&6&6&5& & &  \\
 & &6&6&4&3& &  \\
 & & & & &1&1&0 \\
 & & & & & &0&0
\end{array}.
$$
\end{example}

\begin{lemma} \label{lem:gam-pi}
For each $\gamma$ from Lemma \ref{lem4.2}, $\pi$ is an $S_{\beta}$-partition.
\end{lemma}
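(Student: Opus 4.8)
The plan is to verify directly that $\pi$ is an order-reversing map $S_\beta\to\setN$, which, since $\nu$ is natural, is exactly what it means for $\pi$ to be an $S_\beta$-partition. Writing $c_i:=\lfloor (m+2-i)/2\rfloor$ for the constant subtracted from the $i$-th row, I would first record the two elementary facts the argument rests on: $c_i$ is weakly decreasing with $c_i-c_{i+1}\in\{0,1\}$, and a one-line floor computation gives $c_i-c_{i+1}=1$ precisely when $m\equiv i\pmod 2$. Since the blocks alternate in parity with $B_m$ positive, $B_i$ is positive exactly when $i\equiv m\pmod 2$; hence $c_i-c_{i+1}=1$ \emph{if and only if} $B_i$ is positive. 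This coincidence is the crux.

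For the order-reversing conditions I would treat rows and columns separately. The rows of $\pi$ are weakly decreasing because the rows of $\gamma$ are weakly decreasing by Lemma~\ref{lem4.2} and subtracting the single constant $c_i$ from the whole $i$-th row preserves those inequalities. For the columns, recall that in $S_\beta$ each poset-column $j$ lying in block $B_p$ consists of only the two elements $(p,j)$ and $(p+1,j)$, occupying rows $p$ and $p+1$ of $\gamma$. Thus I only need $\gamma(p,j)-\gamma(p+1,j)\ge c_p-c_{p+1}$. When $B_p$ is negative the right-hand side is $0$ and this is the weak column-decrease of $\gamma$; when $B_p$ is positive the right-hand side is $1$ and this is exactly the \emph{strict} column-decrease on positive blocks furnished by Lemma~\ref{lem4.2}. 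So the columns of $\pi$ are weakly decreasing as well.

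It remains to show $\pi\ge 0$, which I expect to be the main obstacle, as it is not purely formal. By the column-decrease just established it suffices to bound the bottom entry of each column, i.e.\ to prove $\gamma(p+1,j_0)\ge c_{p+1}$ for every column $j_0$ in every block $B_p$. Here $c_{p+1}=\#\{\,i:p+1\le i\le m,\ B_i\text{ positive}\,\}$, since $c_{m+1}=0$ and the telescoping $c_{p+1}=\sum_{i=p+1}^{m}(c_i-c_{i+1})$ counts positive blocks. I would then build a chain in $S_\beta$ descending from $(p+1,j_0)$ to row $m+1$, alternately stepping right within a poset-row (from the bottom of one block into the top of the next) and stepping down a column (into that block's bottom), thereby passing successively through the column-steps of $B_{p+1},B_{p+2},\ldots,B_m$. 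Along this chain $\gamma$ never increases, its terminal value in row $m+1$ is $\ge 0$, and each positive block among $B_{p+1},\ldots,B_m$ forces a strict drop of at least $1$ at its column-step. Hence $\gamma(p+1,j_0)$ exceeds the terminal value by at least the number of such positive blocks, giving $\gamma(p+1,j_0)\ge c_{p+1}$ and therefore $\pi(p+1,j_0)\ge 0$; the top entries are then nonnegative by the column-decrease. Combining the three parts shows $\pi$ is an order-reversing map into $\setN$, i.e.\ an $S_\beta$-partition.
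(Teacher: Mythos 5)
Your proof is correct and follows essentially the same route as the paper: the key point in both is that the positions where the subtracted constants $c_i=\lfloor (m+2-i)/2\rfloor$ drop by $1$ (namely $i\equiv m \pmod 2$) coincide exactly with the positive blocks, where Lemma~\ref{lem4.2} supplies the strict column decrease needed to absorb that drop. The one place you go beyond the paper is the explicit chain argument for nonnegativity, which the paper leaves implicit; it is valid, though it can be shortened by noting that once $\pi$ is order-reversing, every entry is at least $\pi(m+1,r_m)=\gamma(m+1,r_m)\ge 0$, since $(m+1,r_m)$ is the maximum of $S_\beta$ and its entry has nothing subtracted.
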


\proof
Let us denote the entry of $\gamma$ (resp. $\pi$) in row $i$ from the bottom and column $j$ by $\gamma_{-i,j}$ (resp. $\pi_{-i,j}$).
 Note that the last parity block of  $\mu\in \Fcal(\beta)$ was positive. Thus,
 \begin{equation*}
 \gamma_{-(i+1),j}-\gamma_{-i,j} \begin{cases} >0 & \text{ if $i$ is odd},\\
 \ge 0 & \text{ if $i$ is even}. \end{cases}
 \end{equation*}
Thus, the subtraction of $\lfloor \frac{i}{2}\rfloor$ from each entry $\gamma_{-i,j}$ will result in
\begin{equation*}
 \pi_{-(i+1),j}-\pi_{-i,j} \ge 0,
 \end{equation*}
 which shows that $\pi$ is an $S_{\beta}$-partition.
 \endproof

We are now ready to prove Theorem~\ref{thm:main}.  Let us recall that $a_m^{+}(n;d)$ counts the number of partitions $\lambda$ of $n$ with $d$ columns and $m$ parity blocks, where the last block is positive.
Let $\cp(d,m)$ be the set of compositions of $d$ with $m$ parts. Then
{\allowdisplaybreaks
\begin{align*}
\sum_{n\ge 0} a^+_{m}(n;d)q^n
&= \sum_{\beta \in \cp(d,m)}  q^{d^2}
\left(\sum_{\mu\in \Fcal(\beta)} q^{|\mu|} \right)
\tag{$|\lambda|-|\mu|=d^2$}\\
&= \sum_{\beta \in \cp(d,m)} q^{d^2+(r_1+\cdots+r_{m-1}+r_m)}
\sum_{\pi \in \Acal(S_{\beta})} q^{|\pi|}
\tag{Lemma~\ref{lem:gam-pi} and \eqref{eq:gam-pi}}\\
&= q^{d^2+d} \sum_{\beta \in \cp(d,m)}
\frac{q^{r_1+\cdots+r_{m-1}}}{(q;q)_{2d}}
\sum_{w\in \Lcal(S_{\beta})} q^{\maj(w)}
\tag{$r_m=d$ and Proposition~\ref{prop:PP}}\\
&= \frac{q^{d^2+d}}{(q;q)_{2d}} \sum_{\beta \in \cp(d,m)}
\left(\sum_{D \in \Ecal_{d}(r_1,\ldots,r_{m-1})} q^{\maj(D)-(r_1+\cdots+r_{m-1})} \right) \tag{Proposition~\ref{prop:maj}}\\
&= \frac{q^{d^2+d}}{(q;q)_{2d}}
\sum_{D \in \Ecal_{d}^{(m-1)}} q^{\vmr(D)}
\tag{Definition of $\vmr(D)$ and $\Ecal_{s}^{(r)}$}  \\
&= \frac{q^{d^2+d}}{(q;q)_{2d}} \,
q^{\binom{m}{2}}\frac{1-q^m}{1-q^d}\qbin{2d}{d+m}.
\tag{Corollary~\ref{coro:maE}}
\end{align*}
}
Thus \eqref{eq:mainP} in Theorem~\ref{thm:main} is proven.

Similarly, we can prove the generating function for  $a^-_{m}(n;d)$ in Theorem~\ref{thm:main}.  The only difference is that for each composition $\beta$ of $d$, we need to subtract $\lfloor\frac{i-1}{2}\rfloor$ from each entry in the last $i$-th row of $\gamma$ to construct $\pi$. Then,
\begin{equation*}
|\gamma |-|\pi |=(m-1) b_1+(m-2)b_2+\cdots+ 2 b_{m-2}+ b_{m-1}=r_1+r_2+\cdots+r_{m-1}.
\end{equation*}
Since $r_m=d$, \eqref{eq:mainN} in Theorem~\ref{thm:main} holds.

\begin{example}
From Theorem~\ref{thm:main}, 
\begin{align*}
a^+_{2}(15;3)
&=[q^{15}]\frac{q^{3^2+3+\binom{2}{2}}}{(q;q)_{6}}\frac{1-q^2}{1-q^3}\qbin{6}{3+2}\\
&=[q^2]\frac{1}{(1-q)^2 (1-q^3)^2(1-q^4)(1-q^5)}=3,
\end{align*}
where $[q^{j}] \,g(q)$ denotes the coefficient of $q^{j}$ in $g(q)$.
Indeed there are  $3$ partitions of $15$ with $3$ columns and $2$ parity blocks, where the last block is positive:
$$
\left( \begin{array}{c|cc}
3 &2&1\\5 &1&0
\end{array} \right),
\quad
\left( \begin{array}{c|cc}
4&2&1\\4 &1&0
\end{array} \right),
\quad
\left( \begin{array}{cc|c}
3&2&1\\4&2&0
\end{array} \right).
$$
\end{example}

\section{Further results} \label{sec5}

We first recall the definition of singular overpartitions with dotted blocks from \cite{sy16}.
Let $m$ be a positive integer.  A singular overpartition with exactly $m$ dotted blocks and the last dotted block negative is a partition whose parity blocks start as
\begin{align*}
\underbrace{NPNP\cdots NPN}_{m} \cdots \quad \text{ or } \quad \underbrace{PNPN\cdots NPN}_{m+1} \cdots
\end{align*}
if $m$ is odd; and
\begin{align*}
\underbrace{PNPN\cdots NPN}_{m} \cdots \quad \text{ or } \quad \underbrace{NPNP \cdots NPN}_{m+1} \cdots
\end{align*}
if $m$ is even. Here and in the sequel, $P$ and $N$ stand for a positive parity block and negative parity block, respectively.

Also, a singular overpartition with exactly $m$ dotted blocks and the last dotted block positive  is a partition whose parity blocks start as
\begin{align*}
\underbrace{PNPN\cdots PNP}_{m}  \cdots \quad \text{ or } \quad \underbrace{NPNP\cdots PNP}_{m+1}  \cdots
\end{align*}
if $m$ is odd; and
\begin{align*}
\underbrace{NPNP\cdots PNP}_{m} \cdots \quad \text{ or } \quad \underbrace{PNPN\cdots PNP}_{m+1} \cdots
\end{align*}
if $m$ is even.

We now recall a theorem from \cite{sy16} for $(k,i)=(3,1)$.

\begin{theorem}[Theorem 3.1, \cite{sy16}]\label{thm5.1}
Let $m$ be a positive integer.
\begin{enumerate}
\item The number of singular overpartitions of $n$ with $m$ dotted blocks and the last dotted block positive  equals  $p(n-(3m^2+m)/2)$.
\item The number of singular overpartitions of $n$ with $m$ dotted blocks and the last dotted block negative  equals  $p(n-(3m^2-m)/2)$.
\end{enumerate}
\end{theorem}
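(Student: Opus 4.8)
The plan is to reduce the two singular-overpartition counts to the quantities $a^{+}_B(n)$ and $a^{-}_B(n)$ governed by Theorem~\ref{thm1.2}, and then to telescope in $m$ using the pentagonal exponents. The key structural fact is that parity blocks are \emph{maximal} monochromatic runs of columns, so consecutive parity blocks necessarily alternate in sign; hence a partition's block pattern is determined by the sign of its first block together with its number of blocks $B$, and the sign of the last block is forced by the parity of $B$. Reading the definition of ``$m$ dotted blocks, last block negative'' through this lens, the two listed patterns (of lengths $m$ and $m+1$) are exactly the two admissible choices of first-block sign. Carrying out the bookkeeping separately for $m$ even and $m$ odd, and sorting the resulting partitions by $B$ and by the forced last-block sign, I would first establish the identity
\begin{equation*}
\#\{\text{singular overpartitions of }n\text{ with }m\text{ dotted blocks, last negative}\}
=\sum_{B\ge m}a^{-}_B(n)+\sum_{B\ge m+1}a^{+}_B(n),
\end{equation*}
together with its companion for ``last positive'' obtained by interchanging $+$ and $-$. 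In both parity cases the four subfamilies (first block $\pm$, $B$ even/odd) recombine precisely into ``all $B\ge m$ ending negative'' plus ``all $B\ge m+1$ ending positive,'' with no overlap or omission.

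Writing $S^{\pm}_B(q)=\sum_{n\ge1}a^{\pm}_B(n)q^n$ and $T^{\pm}_m(q)=\sum_{B\ge m}S^{\pm}_B(q)$ — a well-defined formal power series since for each $n$ only the finitely many $B\le n$ contribute — the identity above says that the last-negative generating function is $D^{-}_m:=T^{-}_m+T^{+}_{m+1}$ and the last-positive one is $D^{+}_m:=T^{+}_m+T^{-}_{m+1}$. The target is $D^{-}_m=q^{(3m^2-m)/2}/(q;q)_\infty$ and $D^{+}_m=q^{(3m^2+m)/2}/(q;q)_\infty$, whose coefficients are $p(n-(3m^2-m)/2)$ and $p(n-(3m^2+m)/2)$. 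Since $D^{\pm}_m-D^{\pm}_{m+1}=S^{\pm}_m+S^{\mp}_{m+1}$, it suffices to evaluate these two-term combinations and then telescope.

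This is where Theorem~\ref{thm1.2} enters. The reindexings $q^{l(3l+1)/2}(1-q^{2l+1})=q^{l(3l+1)/2}-q^{(l+1)(3l+2)/2}$ and $q^{l(3l-1)/2}(1-q^{4l+2})=q^{l(3l-1)/2}-q^{(l+1)(3l+4)/2}$ turn the finite sums in Theorem~\ref{thm1.2} into partial sums of the Euler pentagonal series. Substituting, the free constants $(-1)^m$ cancel in each two-term combination and the partial pentagonal sums collapse to a single boundary pair, yielding
\begin{align*}
S^{-}_m+S^{+}_{m+1}&=\frac{q^{(3m^2-m)/2}-q^{(3(m+1)^2-(m+1))/2}}{(q;q)_\infty},\\
S^{+}_m+S^{-}_{m+1}&=\frac{q^{(3m^2+m)/2}-q^{(3(m+1)^2+(m+1))/2}}{(q;q)_\infty}.
\end{align*}
Summing the first identity over all indices $\ge m$ telescopes to $D^{-}_m=q^{(3m^2-m)/2}/(q;q)_\infty$, the boundary term at infinity vanishing because $D^{-}_{m'}\to0$ coefficientwise as $m'\to\infty$; the second gives $D^{+}_m$ in the same way. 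Extracting coefficients proves both assertions.

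I expect the main obstacle to be the first step: matching the two-pattern definition of ``$m$ dotted blocks'' to the tail sums $\sum_{B\ge m}a^{-}_B+\sum_{B\ge m+1}a^{+}_B$ requires tracking the forced last-block sign against the parity of $B$ across the cases $m$ even/odd and first block positive/negative, and checking that the pieces tile the family exactly once. After that, the pentagonal telescoping is purely formal. One caveat is that this route uses Theorem~\ref{thm1.2}; a self-contained argument would instead construct a direct bijection between these singular overpartitions of $n$ and ordinary partitions of $n-(3m^2\mp m)/2$, stripping off a minimal alternating ``seed'' Frobenius array whose size equals the relevant pentagonal number — combinatorially more demanding, but free of any dependence on Theorem~\ref{thm1.2}.
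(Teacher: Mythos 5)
Your $q$-series mechanics are sound: granting Theorem~\ref{thm1.2}, the two-term combinations do collapse as you claim (the constants $(-1)^m$ cancel and the truncated pentagonal sums telescope to $S^-_m+S^+_{m+1}=\bigl(q^{m(3m-1)/2}-q^{(m+1)(3m+2)/2}\bigr)/(q;q)_\infty$ and its companion), and your tail-sum identity $\sum_{B\ge m}a^-_B(n)+\sum_{B\ge m+1}a^+_B(n)$ is a correct consequence of the fact that parity blocks alternate in sign. The fatal problem is circularity. Theorem~\ref{thm5.1} is not proved in this paper at all: it is imported from \cite{sy16} and serves as the \emph{input} to the paper's proof of Theorem~\ref{thm1.2}. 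Concretely, Theorem~\ref{thm1.2} is restated as Theorem~\ref{thm4.1} and proved in Section~\ref{sec5} by induction on $m$, where both the base case $a^+_1(n)=p(n)-p(n-1)$ and the induction step invoke Theorem~\ref{thm5.1} to identify $p(n-(3m^2\pm m)/2)$ with the number of partitions whose block pattern has the prescribed $N/P$ prefix. The paper contains no independent proof of Theorem~\ref{thm1.2} — Corollary~\ref{euler} is itself deduced from Theorems~\ref{thm:main} and~\ref{thm1.2} combined, not proved separately — so deriving Theorem~\ref{thm5.1} from Theorem~\ref{thm1.2} begs the question. Indeed, your two identities for $S^-_m+S^+_{m+1}$ and $S^+_m+S^-_{m+1}$ are exactly the coefficientwise content of the paper's induction step, run in the reverse logical direction.

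You flag the dependence on Theorem~\ref{thm1.2} as a ``caveat,'' but in this context it is not a stylistic blemish; it invalidates the argument. The non-circular routes are: (a) the direct construction in \cite{sy16} itself — essentially the bijection you sketch, stripping a minimal alternating seed of Frobenius columns whose size is the pentagonal number $(3m^2\mp m)/2$ and recovering an unrestricted partition of the remainder — or (b) an independent proof of Theorem~\ref{thm1.2}, e.g.\ by combining Theorem~\ref{thm:main} with a standalone proof of the $q$-series identity in Corollary~\ref{euler} (say via the Andrews--Merca truncated pentagonal number theorem), after which your telescoping argument would legitimately yield Theorem~\ref{thm5.1}. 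As written, neither is supplied, so the proposal does not constitute a proof.
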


\subsection{Proof of Theorem~\ref{thm1.2}}
We will prove the following theorem that is equivalent to Theorem~\ref{thm1.2}.

\begin{theorem} \label{thm4.1} 
For $m\ge 1$,
\begin{equation*}
a^+_m(n)=(-1)^{m-1}\sum_{l=0}^{m-1}(-1)^{l}
\left( p\left(n-\frac{3l^2+l}{2}\right)-p\left(n-\frac{3(l+1)^2-(l+1)}{2}\right) \right)
\end{equation*}
and
\begin{align*}
a^-_m(n)
&= (-1)^{m-1}\sum_{l=0}^{m-1}(-1)^l
\left( p\left(n-\frac{3l^2-l}{2}\right)-p\left(n-\frac{3(l+1)^2+(l+1)}{2}\right) \right).
\end{align*}
\end{theorem}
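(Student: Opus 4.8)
The plan is to establish a simple two–term recursion for $a^+_m(n)$ and then pass to the closed form by induction, with Theorem~\ref{thm5.1} supplying the partition values on the right–hand side. The starting observation is that, by maximality, the parity blocks of any partition strictly alternate between $N$ and $P$; hence for our purposes a partition is completely classified by the parity of its first block together with its number of parity blocks, and once the first block and a lower bound on the number of blocks are fixed, the entire initial alternating pattern is forced. Reading the definition of dotted blocks through this lens, the set of partitions of $n$ with $m$ dotted blocks and last dotted block negative is the disjoint union $\{\text{first block } X,\ \ge m \text{ parity blocks}\}\cup\{\text{first block } Y,\ \ge m+1 \text{ parity blocks}\}$, where the starting parities $X,Y$ are dictated by the parity of $m$ (and symmetrically for the positive case). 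This dictionary converts the two alternating patterns in each clause of the definition into a clean statement about first–block parity and block count.

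First I would prove the containment that drives everything: for each $m\ge 2$, the set of partitions with $m$ dotted blocks and last dotted block negative is contained in the set with $m-1$ dotted blocks and last dotted block positive, and the set–theoretic difference consists precisely of the partitions having exactly $m-1$ or exactly $m$ parity blocks with last block positive. Comparing the two unions above, each constituent on the ``negative, $m$'' side sits inside the corresponding constituent on the ``positive, $m-1$'' side (the lower bound on the number of blocks drops by one), and the leftover is $\{\text{first } P,\ \text{exactly } m-1\}\cup\{\text{first } N,\ \text{exactly } m\}$ when $m$ is even (and the mirror image when $m$ is odd); in every case both leftover pieces end in $P$, so together they are counted by $a^+_{m-1}(n)+a^+_m(n)$. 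Applying Theorem~\ref{thm5.1} to the two sides of this set identity yields
\begin{equation*}
a^+_{m-1}(n)+a^+_m(n)=p\!\left(n-\tfrac{3(m-1)^2+(m-1)}{2}\right)-p\!\left(n-\tfrac{3m^2-m}{2}\right).
\end{equation*}
For the base case $m=1$ I would compare directly: the partitions with one dotted block and last block negative are all partitions of $n$ except the single positive block, so Theorem~\ref{thm5.1} gives $p(n)-a^+_1(n)=p(n-1)$, i.e. $a^+_1(n)=p(n)-p(n-1)$.

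Finally I would close the induction. Writing $c_l(n):=p(n-(3l^2+l)/2)-p(n-(3(l+1)^2-(l+1))/2)$, the recursion reads $a^+_{m-1}(n)+a^+_m(n)=c_{m-1}(n)$, while the claimed closed form is $R_m(n):=(-1)^{m-1}\sum_{l=0}^{m-1}(-1)^l c_l(n)$. A one–line telescoping check shows $R_{m-1}(n)+R_m(n)=c_{m-1}(n)$ and $R_1(n)=c_0(n)=p(n)-p(n-1)$, so $R_m$ obeys the same recursion and base case as $a^+_m$; hence $a^+_m(n)=R_m(n)$, the first identity. The negative case follows verbatim after interchanging $P$ and $N$ throughout: this symmetry swaps $a^+_k\leftrightarrow a^-_k$ and the two families in Theorem~\ref{thm5.1}, turning the recursion into $a^-_{m-1}(n)+a^-_m(n)=p(n-\frac{3(m-1)^2-(m-1)}{2})-p(n-\frac{3m^2+m}{2})$, which telescopes to the second identity. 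The main obstacle is the bookkeeping of the first paragraph: correctly reading the two alternating patterns as $\{\text{first }X,\ \ge m\}\cup\{\text{first }Y,\ \ge m+1\}$ and keeping the parity–of–$m$ distinctions straight, since the identities of $X$ and $Y$ (and hence which leftover pieces end in $P$) flip with the parity of $m$. Once that dictionary is fixed, the containment, the recursion, and the induction are all routine.
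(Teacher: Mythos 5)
Your proposal is correct and follows essentially the same route as the paper: both rest on Theorem~\ref{thm5.1} together with the dictionary between dotted-block patterns and (first-block parity, number of parity blocks), both extract the two-term relation $a^+_{m-1}(n)+a^+_m(n)=p\left(n-\frac{3(m-1)^2+(m-1)}{2}\right)-p\left(n-\frac{3m^2-m}{2}\right)$ with base case $a^+_1(n)=p(n)-p(n-1)$, and both close by induction (your telescoping of $R_m$ is exactly the paper's inductive step in disguise). The only difference is presentational: you isolate the recursion and the containment explicitly, whereas the paper folds them into the induction.
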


\proof
We first prove the formula for $a^+_m(n)$ by induction on $m$. First, let $m=1$. By the definition of singular overpartitions and Theorem~\ref{thm5.1},  we know that $p(n-1)$ counts the number of partitions of $n$ whose parity blocks start as $N$ or $PN$. Thus $p(n)-p(n-1)$ counts the number of partitions of $n$ that have exactly one parity block $P$, from which it follows that
\begin{equation*}
a^+_1(n)=p(n)-p(n-1).
\end{equation*}
We now assume that for $m\ge 1$, the statement holds true.

By the definition of singular overpartitions and Theorem~\ref{thm5.1}, we know that  $p(n-(3m^2-m)/2)$ counts the number of partitions of $n$ whose parity blocks start as
\begin{align*}
\underbrace{NPNP\cdots NPN}_{m} \cdots \quad \text{ or } \quad \underbrace{PNPN\cdots NPN}_{m+1} \cdots
\end{align*}
if $m$ is odd; and
\begin{align*}
\underbrace{PNPN\cdots NPN}_{m} \cdots \quad \text{ or } \quad \underbrace{NPNP \cdots NPN}_{m+1} \cdots
\end{align*}
if $m$ is even. Also, by  the definition of singular overpartitions and Theorem~\ref{thm5.1},  we know that $p(n-(3m^2+m)/2)$ counts the number of singular overpartitions of $n$ with $m$ dotted blocks and last dotted block positive, i.e., the number of partitions of $n$ whose parity blocks start as
\begin{align*}
\underbrace{PNPN\cdots PNP}_{m}  \cdots \quad \text{ or } \quad \underbrace{NPNP\cdots PNP}_{m+1}  \cdots
\end{align*}
if $m$ is odd; and
\begin{align*}
\underbrace{NPNP\cdots PNP}_{m} \cdots \quad \text{ or } \quad \underbrace{PNPN\cdots PNP}_{m+1} \cdots
\end{align*}
if $m$ is even. Thus
\begin{align*}
p\left(n-\frac{3m^2+m}{2}\right)-p\left(n-\frac{3(m+1)^2-(m+1)}{2}\right)
\end{align*}
counts the number of partitions of $n$ whose parity blocks are exactly of the type
\begin{align*}
\underbrace{PNPN\cdots PNP}_{m} \quad \text{ or } \quad \underbrace{NPNP\cdots PNP}_{m+1}
\end{align*}
if $m$ is odd; and
\begin{align*}
\underbrace{NPNP\cdots PNP}_{m} \quad \text{ or } \quad \underbrace{PNPN \cdots PNP}_{m+1}
\end{align*}
if $m$ is even. Therefore,
\begin{align*}
&(-1)^{m}\sum_{l=0}^{m}(-1)^{l}
\left( p\left(n-\frac{3l^2+l}{2}\right)-p\left(n-\frac{3(l+1)^2-(l+1)}{2}\right) \right)\\
&=-a^+_{m}(n)+ \left( p\left(n-\frac{3m^2+m}{2}\right)-p\left(n-\frac{3(m+1)^2-(m+1)}{2}\right) \right)
\end{align*}
counts the number of partitions of $n$ whose parity blocks are exactly of the type
\begin{align*}
\underbrace{NPNP\cdots PNP}_{m+1}
\end{align*}
if $m$ is odd; and
\begin{align*}
\underbrace{PNPN \cdots PNP}_{m+1}
\end{align*}
if $m$ is even.  This completes the proof.

The proof of the formula for $a^-_m(n)$ is similar, so we omit the details.
\endproof

\subsection{Proof of Theorem~\ref{thm1.4}}

By the definition, we know that $a^{+}(n;d)$ counts the number of partitions of $n$ whose Frobenius symbols have exactly $d$ columns with  the last column positive.  Let $\lambda$ be such a Frobenius symbol. Then the last entry $h$ of the bottom row in $\lambda$ is the smallest. Also, the top entry in the last column is greater than $h$ because the parity of this column is positive.  We now subtract $h$ from each entry in the bottom row and $h+1$ in the top row. Then the resulting array will be a Frobenius symbol $\tilde{\lambda}$ of $n-(2h+1)d$ with the last entry of the bottom row being $0$. Then the generating function for such Frobenius symbols $\tilde{\lambda}$  is
\begin{equation*}
\frac{q^{d^2}}{(q;q)_{d-1} (q;q)_{d}}.
\end{equation*}
Since $|\lambda|-|\tilde{\lambda}|=(2h+1)d$, the generating function for $\lambda$ becomes
\begin{equation*}
\sum_{h=0}^\infty \frac{q^{d^2}}{(q;q)_{d-1} (q;q)_{d}} q^{(2h+1)d} =\frac{q^{d^2+d}}{(q;q)_{d-1} (q;q)_{d} (1-q^{2d})}.
\end{equation*}
Similarly, we can show the case when the last column is negative. We omit the details.

\begin{remark}
We make some notes on relationships between the sequences discussed in this paper including $a^-_m(n)$ and $a^+_m(n)$ as well as $a^-(n;d)$ and $a^{+}(n;d)$.
\begin{enumerate}
\item From Theorem~\ref{thm4.1}, we have
\begin{equation*}
a^-_m(n) - a^+_m(n) = p\left(n-\frac{3m^2-m}{2}\right)-p\left(n-\frac{3m^2+m}{2}\right).
\end{equation*}
\item
From Theorem~\ref{thm:main}, we have
\begin{equation*}
a^-_{m}(n;d) = a^+_{m}(n+d; d).
\end{equation*}
Thus we also have
\begin{equation*}
a^-(n;d) = a^+(n+d; d).
\end{equation*}
\item
From Theorem~\ref{thm1.4}, we have
\begin{equation*}
a^-(n;d) - a^+(n,d) = \sum_{j= 1}^\infty a(n-2dj+1;d-1),
\end{equation*}
where $a(n;d)$ is the number of partitions of $n$ whose Frobenius symbols have exactly $d$ columns.
\end{enumerate}
\end{remark}


\end{document}